\begin{document}
\baselineskip=14pt

\numberwithin{equation}{section}

\newtheorem{thm}{Theorem}[section]
\newtheorem{lem}[thm]{Lemma}
\newtheorem{cor}[thm]{Corollary}
\newtheorem{Prop}[thm]{Proposition}
\newtheorem{Def}[thm]{Definition}
\newtheorem{Rem}[thm]{Remark}
\newtheorem{Ex}[thm]{Example}

\newcommand{\A}{\mathbb{A}}
\newcommand{\B}{\mathbb{B}}
\newcommand{\C}{\mathbb{C}}
\newcommand{\D}{\mathbb{D}}
\newcommand{\E}{\mathbb{E}}
\newcommand{\F}{\mathbb{F}}
\newcommand{\G}{\mathbb{G}}
\newcommand{\I}{\mathbb{I}}
\newcommand{\J}{\mathbb{J}}
\newcommand{\K}{\mathbb{K}}
\newcommand{\M}{\mathbb{M}}
\newcommand{\N}{\mathbb{N}}
\newcommand{\Q}{\mathbb{Q}}
\newcommand{\R}{\mathbb{R}}
\newcommand{\T}{\mathbb{T}}
\newcommand{\U}{\mathbb{U}}
\newcommand{\V}{\mathbb{V}}
\newcommand{\W}{\mathbb{W}}
\newcommand{\X}{\mathbb{X}}
\newcommand{\Y}{\mathbb{Y}}
\newcommand{\Z}{\mathbb{Z}}
\newcommand\ca{\mathcal{A}}
\newcommand\cb{\mathcal{B}}
\newcommand\cc{\mathcal{C}}
\newcommand\cd{\mathcal{D}}
\newcommand\ce{\mathcal{E}}
\newcommand\cf{\mathcal{F}}
\newcommand\cg{\mathcal{G}}
\newcommand\ch{\mathcal{H}}
\newcommand\ci{\mathcal{I}}
\newcommand\cj{\mathcal{J}}
\newcommand\ck{\mathcal{K}}
\newcommand\cl{\mathcal{L}}
\newcommand\cm{\mathcal{M}}
\newcommand\cn{\mathcal{N}}
\newcommand\co{\mathcal{O}}
\newcommand\cp{\mathcal{P}}
\newcommand\cq{\mathcal{Q}}
\newcommand\rr{\mathcal{R}}
\newcommand\cs{\mathcal{S}}
\newcommand\ct{\mathcal{T}}
\newcommand\cu{\mathcal{U}}
\newcommand\cv{\mathcal{V}}
\newcommand\cw{\mathcal{W}}
\newcommand\cx{\mathcal{X}}
\newcommand\ocd{\overline{\cd}}

\def\c{\centerline}
\def\ov{\overline}
\def\emp {\emptyset}
\def\pa {\partial}
\def\bl{\setminus}
\def\op{\oplus}
\def\sbt{\subset}
\def\un{\underline}
\def\al {\alpha}
\def\bt {\beta}
\def\de {\delta}
\def\Ga {\Gamma}
\def\ga {\gamma}
\def\lm {\lambda}
\def\Lam {\Lambda}
\def\om {\omega}
\def\Om {\Omega}
\def\sa {\sigma}
\def\vr {\varepsilon}
\def\va {\varphi}

\title{\bf Uniqueness and nondegeneracy of solutions for a critical nonlocal equation\thanks{Partially supported by NSFC (11571317).}}

\author {Lele Du$^{1}$, Minbo Yang$^{1}$\thanks{M. Yang is the corresponding author: mbyang@zjnu.edu.cn}
\\
\\
{\small 1 Department of Mathematics, Zhejiang Normal University, Jinhua, Zhejiang, 321004, P. R. China}}

\date{}
\maketitle

\begin{abstract}
The aim of this paper is to classify the positive solutions of the nonlocal critical equation:
$$
-\Delta u=\left(I_{\mu}\ast u^{2^{\ast}_{\mu}}\right)u^{{2}^{\ast}_{\mu}-1},~~x\in\mathbb{R}^{N},
$$
where $0<\mu<N$, if $N=3\ \hbox{or} \ 4$ and $0<\mu\leq4$ if $N\geq5$,
$I_{\mu}$ is the Riesz potential defined by
$$
I_{\mu}(x)=\frac{\Gamma(\frac{\mu}{2})}{\Gamma(\frac{N-\mu}{2})\pi^{\frac{N}{2}}2^{N-\mu}|x|^{\mu}}
$$
with $\Gamma(s)=\int^{+\infty}_{0}x^{s-1}e^{-x}dx$, $s>0$ and $2^{\ast}_{\mu}=\frac{2N-\mu}{N-2}$ is the upper critical exponent in the sense of the Hardy-Littlewood-Sobolev inequality. We apply the moving plane method in integral forms to prove the symmetry and uniqueness of the positive solutions and prove the nondegeneracy of the unique solutions for the equation when $\mu$ close to $N$.
 \vspace{0.3cm}

\noindent{\bf Mathematics Subject Classifications (2000):} 35J15,
35B06, 45G15

\vspace{0.3cm}

 \noindent {\bf Keywords:} Choquard equation; Moving plane method; Symmetry; Uniqueness; Nondegeneracy.
\end{abstract}

\section{Introduction}

In this paper we are interested in classifying the positive solutions of the critical equation
\begin{equation}\label{cc}
-\Delta u=\left(I_{\mu}\ast u^{2^{\ast}_{\mu}}\right)u^{{2}^{\ast}_{\mu}-1},~~x\in\mathbb{R}^{N},
\end{equation}
where $2^{\ast}_{\mu}=\frac{2N-\mu}{N-2}$, $I_{\mu}$ is the Riesz potential defined by
$$
I_{\mu}(x)=\frac{\Gamma(\frac{\mu}{2})}{\Gamma(\frac{N-\mu}{2})\pi^{\frac{N}{2}}2^{N-\mu}|x|^{\mu}}
$$
with $\Gamma(s)=\int^{+\infty}_{0}x^{s-1}e^{-x}dx$, $s>0$ \Big(In some references the Riesz potential is defined by $
I_{\alpha}(x)=\frac{\Gamma(\frac{N-\alpha}{2})}{\Gamma(\frac{\alpha}{2})\pi^{\frac{N}{2}}2^{\alpha}|x|^{N-\alpha}}
$\Big). By rescaling, we know equation \eqref{cc} is equivalent to
\begin{equation}\label{UJM}
-\Delta u=\left(\frac{1}{|x|^{\mu}}\ast u^{{2}^{\ast}_{\mu}}\right)u^{{2}^{\ast}_{\mu}-1},~~x\in\mathbb{R}^{N},
\end{equation}
which is related to the topic of nonlinear Choquard equation
\begin{equation}\label{ce}
-\Delta u+V(x)u=\left(\frac{1}{|x|^{\mu}}\ast u^{p}\right)u^{p-1},~~x\in\mathbb{R}^{N}.
\end{equation}
The Choquard equation arises from various domains of mathematical physics, such as physics of many-particle systems, quantum mechanics, physics of laser beams and so on.
Mathematically, for equation \eqref{ce} with $\mu=1$, $p=2$ and $V$ is a positive constant, Lieb \cite{LE2} proved the existence and uniqueness of the ground
state by using rearrangements technique. Lions \cite{Ls} obtained the existence of a sequence of radially symmetric solutions by variational methods.  Ma and Zhao \cite{MZ} proved that the positive solutions of the stationary Choquard equation are uniquely determined up to translations by the methods in \cite{CLO2}. For suitable range of the exponent $p$, the authors \cite{MZ, MS1} also showed the regularity, positivity and radial symmetry of the ground states and
derived decay property at infinity as well.

 To understand the critical growth for equation \eqref{cc}, we need to recall the well-known Hardy-Littlewood-Sobolev (HLS for short) inequality, see \cite{LE1, LL}.
\begin{Prop}\label{HLS}
 Let $t,r>1$ and $0<\mu<N$ satisfying $1/t+1/r+\mu/N=2$, $f\in L^{t}(\mathbb{R}^N)$ and $h\in L^{r}(\mathbb{R}^N)$. There exists a sharp constant $C(N,\mu,t,r)$, independent of $f,h$, such that
\begin{equation}\label{HLS1}
\int_{\mathbb{R}^{N}}\int_{\mathbb{R}^{N}}\frac{f(x)h(y)}{|x-y|^{\mu}}dxdy\leq C(N,\mu,t,r) |f|_{t}|h|_{r}.
\end{equation}
If $t=r=2N/(2N-\mu)$, then
$$
C(N,\mu,t,r)=C(N,\mu)=\pi^{\frac{\mu}{2}}\frac{\Gamma(\frac{N-\mu}{2})}{\Gamma(N-\frac{\mu}{2})}\left\{\frac{\Gamma(N)}{\Gamma(\frac{N}{2})}\right\}^{\frac{N-\mu}{N}}
$$
and there is equality in \eqref{HLS1} if and only if $f\equiv(const.)h$ and
$$
h(x)=A(\gamma^{2}+|x-a|^{2})^{-(2N-\mu)/2}
$$
for some $A\in \mathbb{C}$, $0\neq\gamma\in\mathbb{R}$ and $a\in \mathbb{R}^{N}$.
\end{Prop}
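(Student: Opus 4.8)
The plan is to establish \eqref{HLS1} in three stages: finiteness of the sharp constant for all admissible exponents, then — specializing to the diagonal case $t=r=2N/(2N-\mu)$ — existence of an optimizer, and finally the identification of the optimizer together with the value of $C(N,\mu)$. Throughout one may assume $f,h\ge0$ (in the complex setting replace $f,h$ by $|f|,|h|$, which only enlarges the left-hand side), and one works with $C(N,\mu,t,r):=\sup\{I(f,h):\ f,h\ge0,\ \|f\|_t=\|h\|_r=1\}$, where $I(f,h)$ denotes the double integral in \eqref{HLS1}.

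For the first stage I would write $I(f,h)=\int_{\mathbb{R}^N}h\,\big(f\ast|\cdot|^{-\mu}\big)$ and use that $|x|^{-\mu}$ belongs to the weak space $L^{N/\mu,\infty}(\mathbb{R}^N)$, since $|\{x:|x|^{-\mu}>\lambda\}|=\omega_N\lambda^{-N/\mu}$. The weak Young inequality $\|f\ast k\|_{r'}\le C\|f\|_t\|k\|_{N/\mu,\infty}$, valid precisely when $\frac1t+\frac{\mu}{N}=1+\frac1{r'}$, followed by Hölder's inequality, gives $I(f,h)\le\|h\|_r\,\|f\ast|\cdot|^{-\mu}\|_{r'}\le C\|f\|_t\|h\|_r$; the exponent constraint is exactly $\frac1t+\frac1r+\frac{\mu}{N}=2$, and the borderline choices of $t,r$ follow by a straightforward limiting argument. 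This proves \eqref{HLS1} with a finite constant.

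From here I fix $t=r=2N/(2N-\mu)$. The Riesz rearrangement inequality gives $I(f,h)\le I(f^{*},h^{*})$ with $\|f^{*}\|_t=\|f\|_t$, $\|h^{*}\|_r=\|h\|_r$, so any optimizing sequence may be taken radial and nonincreasing; since $(f,h)\mapsto I(f,h)$ is a symmetric positive-definite bilinear form (the Fourier transform of $|x|^{-\mu}$ is a positive multiple of $|\xi|^{-(N-\mu)}$), its maximum over pairs with $\|f\|_t=\|h\|_r=1$ is attained with $f\equiv h$, so it suffices to maximize $I(W,W)$ over $\|W\|_t=1$. The genuine difficulty is that $I(W,W)/\|W\|_t^{2}$ is invariant under the whole conformal group — translations, dilations, and inversions — and this noncompactness must be defeated to obtain an actual maximizer. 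Following Lieb, I would transfer the problem to the sphere via inverse stereographic projection $\pi^{-1}:\mathbb{R}^N\to S^N$: with $W(x)=\big(\tfrac{2}{1+|x|^2}\big)^{(2N-\mu)/2}F(\pi^{-1}(x))$ the identities $|\xi-\eta|^2=\tfrac{4|x-y|^2}{(1+|x|^2)(1+|y|^2)}$ and $d\sigma=\big(\tfrac{2}{1+|x|^2}\big)^N dx$ yield $I(W,W)=\iint_{S^N\times S^N}\tfrac{F(\xi)F(\eta)}{|\xi-\eta|^{\mu}}\,d\sigma\,d\sigma'$ and $\|W\|_{L^t(\mathbb{R}^N)}=\|F\|_{L^t(S^N)}$ — the powers match exactly for this value of $t$, which is why the sphere reformulation works only in the diagonal case. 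On the compact sphere I take a maximizing sequence, use the transitive action of the Möbius group $O(N+1,1)$ to recenter it, pull back to $\mathbb{R}^N$, symmetrize by rearrangement, and extract — via Helly's selection theorem for monotone functions — an a.e. limit; ruling out vanishing and concentration by comparing the limiting value with the non-sharp constant from stage one produces a radial nonincreasing maximizer $W$ with $\|W\|_t=1$.

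Finally I would identify $W$. It satisfies the Euler--Lagrange equation $\int_{\mathbb{R}^N}\tfrac{W(y)}{|x-y|^{\mu}}\,dy=c\,W(x)^{\mu/(2N-\mu)}$; the substitution $u=W^{\mu/(2N-\mu)}$ turns this into the \emph{critical} integral equation $u(x)=C\int_{\mathbb{R}^N}\tfrac{u(y)^{(2N-\mu)/\mu}}{|x-y|^{\mu}}\,dy$ with $u\in L^{2N/\mu}(\mathbb{R}^N)$, whose positive solutions are classified, by the integral moving-plane method of Chen--Li--Ou, as $u(x)=c(\gamma^2+|x-a|^2)^{-\mu/2}$; hence $W(x)=A(\gamma^2+|x-a|^2)^{-(2N-\mu)/2}$, and one checks directly, via the classical convolution identity $|x|^{-\mu}\ast(1+|x|^2)^{-(2N-\mu)/2}=c_N(1+|x|^2)^{-\mu/2}$, that these are indeed optimizers (and that equality in \eqref{HLS1} forces $f\equiv(\mathrm{const.})h$). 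Substituting $W_{1,0}(x)=(1+|x|^2)^{-(2N-\mu)/2}$ into $C(N,\mu)=I(W,W)/\|W\|_t^{2}$ and evaluating the Beta-type integrals $\int_{\mathbb{R}^N}(1+|x|^2)^{-s}\,dx=\pi^{N/2}\Gamma(s-\tfrac N2)/\Gamma(s)$ produces the stated value. I expect stage two — producing the maximizer against the conformal noncompactness — to be the real obstacle; stages one and three are essentially routine once the framework is in place, the uniqueness part being exactly the kind of argument developed in the body of this paper.
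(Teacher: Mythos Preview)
The paper does not prove Proposition~\ref{HLS} at all: it is stated as a known result with the citation ``see \cite{LE1, LL}'' and is used as a black box throughout. So there is nothing in the paper to compare your argument against.

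That said, your outline is essentially Lieb's original strategy from \cite{LE1} (weak Young for finiteness, stereographic transfer to $S^N$ to tame the conformal noncompactness, then identification of the extremal), with one notable difference in stage three. Lieb identified the maximizers \emph{directly} on the sphere---showing that constants are the only maximizers of the spherical functional via a second-variation/competing-symmetries argument---and did \emph{not} proceed through the Euler--Lagrange equation; indeed, as the paper recalls just after the proposition, Lieb left the classification of \emph{all} positive solutions of the integral equation \eqref{Int} as an open problem, which was only settled later by Chen--Li--Ou \cite{CLO1}. Your route (derive the EL equation, then invoke the Chen--Li--Ou classification) is therefore logically valid but anachronistic, and strictly stronger than what is needed: you classify every critical point, whereas for the proposition it suffices to classify the maximizers. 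Two minor quibbles: the ``borderline choices of $t,r$'' remark in stage one is unclear---the weak Young inequality already covers the full open range $t,r>1$ and there is no borderline case to treat; and the reduction to $f\equiv h$ via positive-definiteness deserves one more line (it is the inequality $I(f,h)\le I(f,f)^{1/2}I(h,h)^{1/2}$ combined with $t=r$).
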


In fact, by the HLS inequality, we know
$$
\int_{\mathbb{R}^{N}}\int_{\mathbb{R}^{N}}\frac{u(x)^{p}u(y)^{p}}{|x-y|^{\mu}}dxdy
$$
is well-defined if for $t>1$, $u^{p}\in L^{t}(\mathbb{R}^{N})$ satisfying
$$
\frac{2}{t}+\frac{\mu}{N}=2.
$$
Therefore, for $u\in H^{1}(\mathbb{R}^{N})$, the Sobolev embedding implies
$$
\frac{2N-\mu}{N}\leq p\leq\frac{2N-\mu}{N-2}.
$$
Thus we may call $\frac{2N-\mu}{N}$ the lower critical exponent and $\frac{2N-\mu}{N-2}$ the upper critical exponent due to the HLS inequality. Lieb \cite{LE1} classified all the maximizers of the HLS functional under constraints and obtained the best constant, then he posed the classification of the solutions of
\begin{equation}\label{Int}
u(x)=\int_{\mathbb{R}^{N}}\frac{u(y)^{\frac{N+\alpha}{N-\alpha}}}{|x-y|^{N-\alpha}}dy,~~x\in\mathbb{R}^{N},
\end{equation}
as an open problem. Later, Chen, Li and Ou \cite{CLO1} developed the method of moving planes in integral forms to prove that any critical points of the functional was radially symmetric and assumed the unique form and gave a positive answer to Lieb's open problem.
In fact, equation \eqref{Int} is closely related to the well-known fractional equation
\begin{equation}\label{Frac}
(-\Delta)^{\frac{\alpha}{2}}u=u^{\frac{N+\alpha}{N-\alpha}},~~x\in\mathbb{R}^{N}.
\end{equation}
While Lu and Zhu \cite{LZ} studied the symmetry and
regularity of extremals of the following integral equation:
\begin{equation}\label{Int2}
u(x)=\int_{\mathbb{R}^{N}}\frac{u(y)^{\frac{(N+\alpha-2s)}{N-\alpha}}}{|y|^s|x-y|^{N-\alpha}}dy,~~x\in\mathbb{R}^{N},
\end{equation}
which is related to the fractional singular case
\begin{equation}\label{Frac2}
(-\Delta)^{\frac{\alpha}{2}}u=\frac{u^{\frac{N+\alpha-2s}{N-\alpha}}}{|x|^s},~~x\in\mathbb{R}^{N}.
\end{equation}
If $\alpha=2$ and $0<s<2$, equation \eqref{Frac2} is closely related to the Euler-Lagrange equation
of the extremal functions of the classical Hardy-Sobolev inequality which says that there exists a constant $C>0$ such that
\begin{equation}\label{HS}
\big(\int_{\mathbb{R}^{N}}\frac{u^{\frac{2(N-s)}{N-\alpha}}}{|x|^s}dx\big)^{\frac{N-\alpha}{N-s}}\leq C \int_{\mathbb{R}^{N}}|\nabla u|^{2}dx.
\end{equation}
When $N\geq3$, $\alpha=2$ and $s=0$, both equation \eqref{Frac} and \eqref{Frac2} go back to
\begin{equation}\label{lcritical}
-\Delta u=u^{\frac{N+2}{N-2}},~~x\in\mathbb{R}^{N}.
\end{equation}
Equation \eqref{lcritical} is related to the Euler-Lagrange equation
of the extremal functions of the Sobolev inequality  and is a special case of the Lane-Emden equation
\begin{equation}\label{LE}
-\Delta u=u^{p},~~x\in\mathbb{R}^{N}.
\end{equation}
It is well known that, for $0<p<\frac{N+2}{N-2}$, Gidas and Spruck \cite{GS} proved that \eqref{LE} has no positive solutions. This
result is optimal in the sense that for any $p\geq\frac{N+2}{N-2}$, there are infinitely many positive solutions
to \eqref{LE}. Gidas, Ni and Nirenberg \cite{GNN}, Caffarelli, Gidas and Spruck \cite{CGS} proved the symmetry and uniqueness of the positive solutions respectively. Chen and Li \cite{CL}, Li \cite{LC} simplified the results above as an application of the moving plane method. Li \cite{Li} used moving sphere method. The classification of the solutions of equation \eqref{lcritical} plays an important role in the Yamabe problem, the prescribed scalar curvature problem on Riemannian manifolds and the priori estimates in nonlinear equations. Aubin \cite{A}, Talenti \cite{Ta} proved that the best Sobolev constant $S$
 can be achieved by a two-parameter solutions of the form
\begin{equation}\label{U0}
 U_0(x):=[N(N-2)]^{\frac{N-2}{4}}\Big(\frac{t}{t^2+|x-\xi|^{2}}\Big)^{\frac{N-2}{2}}.
 \end{equation}
While Lieb \cite{LE1} proved that the best constant in \eqref{HS} is achieved and the extremal
function is identified by
$$\frac{1}{(1+|x|^{2-t})^{\frac{N-2}{2-t}}}.$$
Furthermore, note that equation \eqref{lcritical}
has a $(N+1)$-dimensional manifold of solutions given by
$$
\mathcal{Z}=\left\{z_{t,\xi}=[N(N-2)]^{\frac{N-2}{4}}\Big(\frac{t}{t^2+|x-\xi|^{2}}\Big)^{\frac{N-2}{2}},
\xi\in\mathbb{R}^{N}, t\in\mathbb{R}^{+}\right\}.
$$
For every $Z\in \mathcal{Z}$, it is said to be nondegenerate in the sense that the linearized equation
\begin{equation}\label{Linearized}
-\Delta v=Z^{\frac{4}{N-2}}v
\end{equation}
in $D^{1,2}(\mathbb{R}^N)$ only admits solutions of the form
$$
\eta=aD_{t}Z+\mathbf{b}\cdot\nabla Z,
$$
where $a\in\mathbb{R},\mathbf{b}\in\mathbb{R}^{N}$.

Regarding to equation \eqref{cc}, we find that it is a special case of the nonlocal equation
\begin{equation}\label{cc2}
-\Delta u=(I_{\mu}\ast u^{p})u^{p-1},~~x\in\mathbb{R}^{N},
\end{equation}
where $\frac{2N-\mu}{N}\leq p\leq\frac{2N-\mu}{N-2}$.  By the convergence property of the Riesz potential that $I_{\mu}\to \delta_x$ as $\mu\to N$, we find that equation \eqref{cc2} goes back to equation \eqref{LE} as $\mu\to N$ \cite{Lan}. Thus equation \eqref{LE} can also be treated as the limit equation\eqref{cc2}. In this sense, we may consider equation \eqref{cc} as a generalization of equation \eqref{lcritical} from a nonlocal point of view. Moreover, from the HLS inequality and Sobolev inequality, we know equation \eqref{cc} is related to the Euler-Lagrange equation
of the extremal functions of the limit embedding.

In the following we define the best constant $S^{\ast}_{H,L}$ by
\begin{equation}\label{YHNT}
S^{\ast}_{H,L}=\inf_{u\in D^{1,2}(\mathbb{R}^{N})\backslash\{0\}}\frac{\displaystyle\int_{\mathbb{R}^{N}}|\nabla u|^{2}dx}{\left(\displaystyle\int_{\mathbb{R}^{N}}(I_{\mu}\ast |u|^{2^{\ast}_{\mu}})|u|^{2^{\ast}_{\mu}}dy\right)^{\frac{N-2}{2N-\mu}}}.
\end{equation}
After rescaling, we know that the extremal functions for the best constant $S^{\ast}_{H,L}$ also satisfy
$$
-\Delta u=\left(I_{\mu}\ast u^{2^{\ast}_{\mu}}\right)u^{{2}^{\ast}_{\mu}-1},~~x\in\mathbb{R}^{N}.
$$
Notice that, for all $u\in D^{1,2}(\mathbb{R}^{N})$,
\begin{equation}\nonumber
\begin{aligned}
\left(\int_{\mathbb{R}^{N}}(I_{\mu}\ast |u|^{2^{\ast}_{\mu}})|u|^{2^{\ast}_{\mu}}dy\right)^{\frac{N-2}{2N-\mu}}
&=\left(\frac{\Gamma(\frac{\mu}{2})}{\Gamma(\frac{N-\mu}{2})\pi^{\frac{N}{2}}2^{N-\mu}}\right)^{\frac{N-2}{2N-\mu}}
\left(\int_{\mathbb{R}^{N}}\int_{\mathbb{R}^{N}}\frac{|u(x)|^{2^{\ast}_{\mu}}|u(y)|^{2^{\ast}_{\mu}}}{|x-y|^{\mu}}dxdy\right)^{\frac{N-2}{2N-\mu}}\\
%&\leq \left(\frac{\Gamma(\frac{\mu}{2})}{\Gamma(\frac{N-\mu}{2})\pi^{\frac{N}{2}}2^{N-\mu}}\right)^{\frac{N-2}{2N-\mu}}C(N,\mu)^{\frac{N-2}{2N-\mu}}||u||_{L^{2^{\ast}}}^{2}\\
&\leq\left(\frac{\Gamma(\frac{\mu}{2})}{\Gamma(\frac{N-\mu}{2})\pi^{\frac{N}{2}}2^{N-\mu}}\pi^{\frac{\mu}{2}}\frac{\Gamma(\frac{N-\mu}{2})}{\Gamma(N-\frac{\mu}{2})}\left\{\frac{\Gamma(N)}{\Gamma(\frac{N}{2})}\right\}^{\frac{N-\mu}{N}}\right)^{\frac{N-2}{2N-\mu}}||u||_{L^{2^{\ast}}}^{2}\\
&=\left(\left(\frac{1}{2\sqrt{\pi}}\right)^{N-\mu}\frac{\Gamma(\frac{\mu}{2})}{\Gamma(N-\frac{\mu}{2})}\left\{\frac{\Gamma(N)}{\Gamma(\frac{N}{2})}\right\}^{\frac{N-\mu}{N}}\right)^{\frac{N-2}{2N-\mu}}||u||_{L^{2^{\ast}}}^{2}.\\
\end{aligned}
\end{equation}
Let $C^{\ast}(N,\mu):=\left(\frac{1}{2\sqrt{\pi}}\right)^{N-\mu}\frac{\Gamma(\frac{\mu}{2})}{\Gamma(N-\frac{\mu}{2})}\left\{\frac{\Gamma(N)}{\Gamma(\frac{N}{2})}\right\}^{\frac{N-\mu}{N}}$,   then $C^{\ast}(N,\mu)\to1$ as $\mu\to N$, Moreover we have
\begin{equation}\label{ABCD}
\left(\int_{\mathbb{R}^{N}}\left(I_{\mu}\ast |u|^{2^{\ast}_{\mu}}\right)|u|^{2^{\ast}_{\mu}}dy\right)^{\frac{N-2}{2N-\mu}}
\leq C^{\ast}(N,\mu)^{\frac{N-2}{2N-\mu}}||u||_{L^{2^{\ast}}}^{2}.\\
\end{equation}

Thus we may conclude the following lemma:
\begin{lem}\label{lem1}
The constant $S^{\ast}_{H,L}$ defined in \eqref{YHNT} satisfies
\begin{equation}\label{WSX}
S^{\ast}_{H,L}=\frac{S}{C^{\ast}(N,\mu)^{\frac{N-2}{2N-\mu}}}.
\end{equation}
Where $S$ is the Sobolev constant and
$$
C^{\ast}(N,\mu)=\left(\frac{1}{2\sqrt{\pi}}\right)^{N-\mu}\frac{\Gamma(\frac{\mu}{2})}{\Gamma(N-\frac{\mu}{2})}\left\{\frac{\Gamma(N)}{\Gamma(\frac{N}{2})}\right\}^{\frac{N-\mu}{N}}.
$$
What's more,
$$
{U}_\mu(x)=S^{\frac{(N-\mu)(2-N)}{4(N-\mu+2)}}C^{\ast}(N,\mu)^{\frac{2-N}{2(N-\mu+2)}}U_0
$$
is the unique family of minimizers for $S^{\ast}_{H,L}$ that satisfies equation \eqref{cc}.
\end{lem}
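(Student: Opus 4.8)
The plan is to locate $S^{\ast}_{H,L}$ by sandwiching it between $S/C^{\ast}(N,\mu)^{\frac{N-2}{2N-\mu}}$ and itself, to identify all minimizers, and then to single out the one normalized so as to solve \eqref{cc}. For the lower bound, given $u\in D^{1,2}(\R^N)\setminus\{0\}$ I combine \eqref{ABCD} with the sharp Sobolev inequality $\int_{\R^N}|\nabla u|^{2}\,dx\ge S\,\|u\|_{L^{2^{\ast}}}^{2}$ to get
\[
\frac{\int_{\R^N}|\nabla u|^{2}\,dx}{\big(\int_{\R^N}(I_{\mu}\ast|u|^{2^{\ast}_{\mu}})|u|^{2^{\ast}_{\mu}}\,dy\big)^{\frac{N-2}{2N-\mu}}}\ \ge\ \frac{\int_{\R^N}|\nabla u|^{2}\,dx}{C^{\ast}(N,\mu)^{\frac{N-2}{2N-\mu}}\,\|u\|_{L^{2^{\ast}}}^{2}}\ \ge\ \frac{S}{C^{\ast}(N,\mu)^{\frac{N-2}{2N-\mu}}},
\]
hence $S^{\ast}_{H,L}\ge S/C^{\ast}(N,\mu)^{\frac{N-2}{2N-\mu}}$.

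To see that this bound is saturated, and exactly along $\mathcal{Z}$, I test the two inequalities above with $u=U_0$: equality holds in the Sobolev step because, by Aubin--Talenti, $U_0$ attains $S$, and equality holds in \eqref{ABCD} because $U_0^{2^{\ast}_{\mu}}$ is, after the elementary identity $\tfrac{(N-2)2^{\ast}_{\mu}}{2}=\tfrac{2N-\mu}{2}$, precisely of the extremal form $A(\gamma^{2}+|x-a|^{2})^{-(2N-\mu)/2}$ appearing in Proposition \ref{HLS} with $t=r=2N/(2N-\mu)$ and $f=h=U_0^{2^{\ast}_{\mu}}$. This yields \eqref{WSX} and shows the infimum is attained at every $U_0\in\mathcal{Z}$ and at all its positive multiples. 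Conversely, any nonnegative nontrivial minimizer $u$ must realize equality simultaneously in \eqref{ABCD}, hence in the HLS inequality of Proposition \ref{HLS}, and in the Sobolev inequality; the equality case of Proposition \ref{HLS} then forces $u^{2^{\ast}_{\mu}}(x)=A(\gamma^{2}+|x-a|^{2})^{-(2N-\mu)/2}$, i.e. $u(x)=c\,(\gamma^{2}+|x-a|^{2})^{-(N-2)/2}$ with $c>0$, $\gamma\ne0$, $a\in\R^{N}$, a positive multiple of a member of $\mathcal{Z}$. So the set of positive minimizers for $S^{\ast}_{H,L}$ is exactly $\{c\,U_0:\ c>0,\ U_0\in\mathcal{Z}\}$.

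It remains to pick out the multiple that solves \eqref{cc}. Writing the Euler--Lagrange equation of the quotient \eqref{YHNT}, a minimizer becomes, after multiplication by one suitable positive constant, a solution of \eqref{cc}; so there is a unique $\kappa>0$ with $\kappa U_0$ solving \eqref{cc}. Testing \eqref{cc} against $\kappa U_0$ itself gives $\int_{\R^N}|\nabla(\kappa U_0)|^{2}\,dx=\int_{\R^N}(I_{\mu}\ast(\kappa U_0)^{2^{\ast}_{\mu}})(\kappa U_0)^{2^{\ast}_{\mu}}\,dx$. Inserting $\int_{\R^N}|\nabla U_0|^{2}\,dx=S\,\|U_0\|_{L^{2^{\ast}}}^{2}$, the equality case of \eqref{ABCD} established above, $\int_{\R^N}(I_{\mu}\ast U_0^{2^{\ast}_{\mu}})U_0^{2^{\ast}_{\mu}}\,dx=C^{\ast}(N,\mu)\,\|U_0\|_{L^{2^{\ast}}}^{2\cdot 2^{\ast}_{\mu}}$, and the normalization $\|U_0\|_{L^{2^{\ast}}}^{2}=S^{(N-2)/2}$ (which follows from $U_0$ solving $-\Delta U_0=U_0^{(N+2)/(N-2)}$ together with $\|\nabla U_0\|_{L^{2}}^{2}=S\,\|U_0\|_{L^{2^{\ast}}}^{2}$), this identity reduces to the scalar equation $\kappa^{2}\,S\,\|U_0\|_{L^{2^{\ast}}}^{2}=\kappa^{2\cdot 2^{\ast}_{\mu}}\,C^{\ast}(N,\mu)\,\|U_0\|_{L^{2^{\ast}}}^{2\cdot 2^{\ast}_{\mu}}$. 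Since $2\cdot 2^{\ast}_{\mu}-2=\tfrac{2(N-\mu+2)}{N-2}>0$, this pins $\kappa$ down, and simplifying the exponents gives $\kappa=S^{\frac{(N-\mu)(2-N)}{4(N-\mu+2)}}C^{\ast}(N,\mu)^{\frac{2-N}{2(N-\mu+2)}}$, the asserted formula for $U_\mu$.

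The delicate point is this last step: one must verify that the Euler--Lagrange equation of \eqref{YHNT} reduces to \eqref{cc} under a single positive rescaling (the Lagrange multiplier is positive, and minimizers may be taken nonnegative), and then carry the $\Gamma$-factor bookkeeping through. The route via the scalar Nehari-type identity above is convenient precisely because it never needs the explicit value of the convolution $I_{\mu}\ast U_0^{2^{\ast}_{\mu}}$; alternatively one can check \eqref{cc} by hand using $I_{\mu}\ast U_0^{2^{\ast}_{\mu}}=c_{N,\mu}\,U_0^{\mu/(N-2)}$ and matching both sides of \eqref{cc} to a multiple of $U_0^{(N+2)/(N-2)}$, at the cost of computing $c_{N,\mu}$. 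All the rest --- the value of $S^{\ast}_{H,L}$ and the identification of the extremals --- is a direct consequence of the two sharp inequalities already recorded, together with their equality characterizations.
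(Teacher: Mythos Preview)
Your proof is correct and follows the same two-sided squeeze as the paper: the lower bound from \eqref{ABCD} combined with Sobolev, and the upper bound by testing with a Talenti profile where both inequalities are saturated. The paper's argument is essentially identical but terser---it tests directly with $U_\mu$ (a scalar multiple of $U_0$, so the quotient is the same) and simply asserts that $U_\mu$ is the unique minimizer solving \eqref{cc}, without deriving the normalization constant.

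What you add, and the paper omits, is the explicit computation of the scalar $\kappa$ via the Nehari-type identity $\|\nabla(\kappa U_0)\|_{L^2}^2=\int(I_\mu\ast(\kappa U_0)^{2^\ast_\mu})(\kappa U_0)^{2^\ast_\mu}$ together with $\|U_0\|_{L^{2^\ast}}^2=S^{(N-2)/2}$. This is a genuine improvement in completeness: the paper never checks that the stated $U_\mu$ actually solves \eqref{cc}, whereas your scalar equation does. Your characterization of all minimizers through the simultaneous equality cases of HLS and Sobolev is also more explicit than the paper's one-line assertion. Both routes are short; yours is self-contained.
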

\begin{proof}
From \eqref{ABCD}, we know that
\begin{equation}\nonumber
\begin{aligned}
S^{\ast}_{H,L}&=\inf_{u\in D^{1,2}(\mathbb{R}^{N})\backslash\{0\}}\frac{\displaystyle\int_{\mathbb{R}^{N}}|\nabla u|^{2}dx}{\left(\displaystyle\int_{\mathbb{R}^{N}}\left(I_{\mu}\ast |u|^{2^{\ast}_{\mu}}\right)|u|^{2^{\ast}_{\mu}}dy\right)^{\frac{N-2}{2N-\mu}}}\\
&\geq\frac{1}{C^{\ast}(N,\mu)^{\frac{N-2}{2N-\mu}}}\inf_{u\in D^{1,2}(\mathbb{R}^{N})\backslash\{0\}}\frac{\displaystyle\int_{\mathbb{R}^{N}}|\nabla u|^{2}dx}{||u||_{L^{2^{\ast}}}^{2}}\\
&=\frac{S}{C^{\ast}(N,\mu)^{\frac{N-2}{2N-\mu}}}.
\end{aligned}
\end{equation}
Meanwhile, we obtain from the definition of $S^{\ast}_{H,L}$ that
\begin{equation}\nonumber
\begin{aligned}
S^{\ast}_{H,L}&=\inf_{u\in D^{1,2}(\mathbb{R}^{N})\backslash\{0\}}\frac{\displaystyle\int_{\mathbb{R}^{N}}|\nabla u|^{2}dx}{\left(\displaystyle\int_{\mathbb{R}^{N}}\left(I_{\mu}\ast |u|^{2^{\ast}_{\mu}}\right)|u|^{2^{\ast}_{\mu}}dy\right)^{\frac{N-2}{2N-\mu}}}\\
&\leq\frac{\displaystyle\int_{\mathbb{R}^{N}}|\nabla {U_\mu}|^{2} dx}{\left(\displaystyle\int_{\mathbb{R}^{N}}\left(I_{\mu}\ast {U}_\mu^{2^{\ast}_{\mu}}\right){U}_\mu^{2^{\ast}_{\mu}}dy\right)^{\frac{N-2}{2N-\mu}}}\\
&=\frac{1}{C^{\ast}(N,\mu)^{\frac{N-2}{2N-\mu}}}\frac{\displaystyle\int_{\mathbb{R}^{N}}|\nabla {U_\mu}|^{2} dx}{||{U}_\mu||_{L^{2^{\ast}}}^{2}}\\
&=\frac{S}{C^{\ast}(N,\mu)^{\frac{N-2}{2N-\mu}}}.
\end{aligned}
\end{equation}
Hence $S^{\ast}_{H,L}$ satisfies \eqref{WSX}. Moreover, we know ${U}_\mu$ is the unique minimizer for the best constant $S^{\ast}_{H,L}$ and satisfies equation \eqref{cc}.
\end{proof}
However, the classification of the solutions of equation \eqref{cc} still remains open, one may wonder that if $U_{\mu}(x)$ is the unique positive solution of equation \eqref{cc} and is nondegenerate in the sense that the linearized equation \eqref{Linearized} only admits solutions $D^{1,2}(\mathbb{R}^{N})$ of the form
$$
\eta=aD_{t}{U}_\mu+\mathbf{b}\cdot\nabla {U}_\mu,
$$
where $a\in\mathbb{R},\mathbf{b}\in\mathbb{R}^{N}$.
Inspired by the results about the local case, in this paper we are going to classify the positive solutions of the nonlinear Choquard equation \eqref{cc} with upper critical exponent and prove the nondegeneracy of this unique family of solutions.

 We are interested in the existence of weak solution $u(x)\in D^{1,2}(\mathbb{R}^{N})$ satisfying
\begin{equation}\label{YHN}
\int_{\mathbb{R}^{N}}\nabla u\nabla \nu dx=\int_{\mathbb{R}^{N}}\left(I_{\mu}\ast |u|^{2^{\ast}_{\mu}}\right)u^{{2}^{\ast}_{\mu}-1}\nu dx,~~\forall \nu\in C^{\infty}_{0}(\mathbb{R}^{N}).
\end{equation}
By Theorem 4.5 of \cite{CLO1} we know equation \eqref{cc} is equivalent to the integral equation
\begin{equation}\label{OPQ}
u(x)=\int_{\mathbb{R}^{N}}\frac{1}{|x-y|^{N-2}}\left(\int_{\mathbb{R}^{N}}\frac{u(z)^{2^{\ast}_{\mu}}}{|y-z|^{\mu}}dz\right)u(y)^{{2}^{\ast}_{\mu}-1}dy.
\end{equation}
Thus we can rewrite equation \eqref{OPQ} into an equivalent system
\begin{equation}\label{Sys}
\begin{cases}
u(x)=\displaystyle\int_{\mathbb{R}^{N}}\frac{v(y)u(y)^{{2}^{\ast}_{\mu}-1}}{|x-y|^{N-2}}dy,\vspace{3mm}\\
\displaystyle v(x)=\int_{\mathbb{R}^{N}}\frac{u(y)^{2^{\ast}_{\mu}}}{|x-y|^{\mu}}dy.
\end{cases}
\end{equation}

The main results of this paper are stated as
\begin{thm}\label{TH}
Assume that $N\geq3$, $0<\mu<N$, if $N=3\ \hbox{or} \ 4$ while $0<\mu\leq4$ if $N\geq5$, and $2^{\ast}_{\mu}=\frac{2N-\mu}{N-2}$, let $(u,v)\in L^{2^{\ast}}_{loc}(\mathbb{R}^{N})\times L^{\frac{2N}{\mu}}_{loc}(\mathbb{R}^{N})$ be a pair of positive solutions of system \eqref{Sys}. Then $u$ and $v$ must be radially symmetric, moreover, $u$ assumes the form
\begin{equation}\label{class}
c\left(\frac{t}{t^{2}+|x-x_{0}|^{2}}\right)^\frac{{N-2}}{2}
\end{equation}
about some point $x_{0}$ with positive constants $c$ and $t$.
\end{thm}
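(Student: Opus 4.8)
The plan is to adapt the method of moving planes in integral forms of Chen--Li--Ou \cite{CLO1} to the coupled system \eqref{Sys}, exploiting the conformal invariance of the two Riesz kernels that appear in it. First I would gather the necessary a priori information. Starting from $(u,v)\in L^{2^{\ast}}_{loc}(\mathbb{R}^{N})\times L^{\frac{2N}{\mu}}_{loc}(\mathbb{R}^{N})$, a bootstrap built on the HLS inequality (Proposition \ref{HLS}) and the mapping properties of the Riesz potentials shows that $u$ and $v$ are positive, bounded and continuous on $\mathbb{R}^{N}$, and that $u(x)=O(|x|^{-(N-2)})$ and $v(x)=O(|x|^{-\mu})$ as $|x|\to\infty$. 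Then I introduce the Kelvin transforms
$$
\bar u(x)=|x|^{-(N-2)}u\!\Big(\tfrac{x}{|x|^{2}}\Big),\qquad \bar v(x)=|x|^{-\mu}v\!\Big(\tfrac{x}{|x|^{2}}\Big).
$$
Using the identity $\big|\tfrac{x}{|x|^{2}}-\tfrac{y}{|y|^{2}}\big|=\tfrac{|x-y|}{|x|\,|y|}$ together with $(N-2)2^{\ast}_{\mu}=2N-\mu$ and $(N-2)(2^{\ast}_{\mu}-1)=N-\mu+2$, a change of variables shows that $(\bar u,\bar v)$ satisfies the very same system \eqref{Sys} on $\mathbb{R}^{N}\setminus\{0\}$ with no extra singular weight; moreover the decay of $u,v$ makes $\bar u,\bar v$ bounded at infinity and at worst mildly singular at the origin (and in fact bounded there when $\int_{\mathbb{R}^{N}}vu^{2^{\ast}_{\mu}-1}$ and $\int_{\mathbb{R}^{N}}u^{2^{\ast}_{\mu}}$ are finite).

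Next I run the moving planes on $(\bar u,\bar v)$. Fix the direction $x_{1}$, and for $\lambda\in\mathbb{R}$ set $T_{\lambda}=\{x_{1}=\lambda\}$, $\Sigma_{\lambda}=\{x_{1}<\lambda\}$, $x^{\lambda}=(2\lambda-x_{1},x_{2},\dots,x_{N})$, $\bar u_{\lambda}(x)=\bar u(x^{\lambda})$, $w^{u}_{\lambda}=\bar u_{\lambda}-\bar u$, and analogously $w^{v}_{\lambda}$. Splitting the defining integrals over $\Sigma_{\lambda}$ and its complement and reflecting, one gets, for $x\in\Sigma_{\lambda}$,
$$
w^{u}_{\lambda}(x)=\int_{\Sigma_{\lambda}}\Big(\frac{1}{|x-y|^{N-2}}-\frac{1}{|x^{\lambda}-y|^{N-2}}\Big)\big(\bar v_{\lambda}\bar u_{\lambda}^{2^{\ast}_{\mu}-1}-\bar v\,\bar u^{2^{\ast}_{\mu}-1}\big)(y)\,dy,
$$
and the parallel identity for $w^{v}_{\lambda}$ with kernel $|x-y|^{-\mu}$ and nonlinearity $\bar u^{2^{\ast}_{\mu}}$; both kernel differences are nonnegative on $\Sigma_{\lambda}\times\Sigma_{\lambda}$. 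On the set where $w^{u}_{\lambda}>0$ I write $\bar v_{\lambda}\bar u_{\lambda}^{2^{\ast}_{\mu}-1}-\bar v\,\bar u^{2^{\ast}_{\mu}-1}=\bar v_{\lambda}\big(\bar u_{\lambda}^{2^{\ast}_{\mu}-1}-\bar u^{2^{\ast}_{\mu}-1}\big)+(\bar v_{\lambda}-\bar v)\bar u^{2^{\ast}_{\mu}-1}$ and dominate the first increment by $(2^{\ast}_{\mu}-1)\,\bar u_{\lambda}^{2^{\ast}_{\mu}-2}(\bar u_{\lambda}-\bar u)$; this uses $2^{\ast}_{\mu}-1=\frac{N-\mu+2}{N-2}\ge1$ and keeps the weight $\bar u_{\lambda}^{2^{\ast}_{\mu}-2}$ (with $2^{\ast}_{\mu}-2=\frac{4-\mu}{N-2}\ge0$) bounded; this is precisely where the hypothesis $\mu\le4$ (automatic when $N=3,4$) enters. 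Combining the two resulting integral inequalities with the HLS and H\"older inequalities and the a priori decay, one shows (as for the single equation in \cite{CLO1}) that $w^{u}_{\lambda}\le0$ and $w^{v}_{\lambda}\le0$ in $\Sigma_{\lambda}$ for all $\lambda$ in an initial range, which starts the procedure.

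Then I slide the plane: let $\lambda_{0}$ be the supremum of those $\lambda$ up to which $w^{u}_{s}\le0$ and $w^{v}_{s}\le0$ hold in $\Sigma_{s}$. At $\lambda_{0}$, either $w^{u}_{\lambda_{0}}\equiv w^{v}_{\lambda_{0}}\equiv0$, or the strong form of the integral identities forces $w^{u}_{\lambda_{0}}<0$ and $w^{v}_{\lambda_{0}}<0$ in the interior of $\Sigma_{\lambda_{0}}$, and then the smallness of the relevant tail integrals lets one push the plane slightly past $\lambda_{0}$, contradicting maximality; hence $\bar u$ and $\bar v$ are symmetric about $T_{\lambda_{0}}$. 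Carrying this out from both sides and in every direction shows that $\bar u$ is symmetric about one hyperplane in each direction; the now-standard argument (as in \cite{CLO1,Li}) --- treating separately the case in which $\bar u$ is regular at the origin, where one may instead run the moving planes directly on $u$, and the singular case, in which the symmetry hyperplanes are forced through the origin --- then shows these hyperplanes to be concurrent, so that $u$ (and $v$) is radially symmetric and monotone decreasing about a single point $x_{0}$.

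Finally, a radial decreasing solution of \eqref{Sys} about $x_{0}$ corresponds, via the equivalence with \eqref{cc} recorded above, to a radial positive solution of the conformally invariant equation; combining translation and dilation invariance with the method of moving spheres \cite{Li} (equivalently, an ODE analysis of the radial profile) identifies $u$ with the one-parameter family $c\big(t/(t^{2}+|x-x_{0}|^{2})\big)^{(N-2)/2}$, which by Lemma \ref{lem1} coincides with $U_{\mu}$ up to scaling; this is \eqref{class}. \textbf{The main difficulty} is the coupling of the system: the moving-plane step produces two intertwined integral inequalities for $w^{u}_{\lambda}$ and $w^{v}_{\lambda}$, and closing them requires a careful, balanced choice of Lebesgue exponents and a repeated use of the HLS inequality for both kernels at once. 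It is precisely here that $0<\mu\le4$ is indispensable: it keeps $2^{\ast}_{\mu}\ge2$, so that the nonlinear increments are controlled by $\bar u_{\lambda}^{2^{\ast}_{\mu}-2}(\bar u_{\lambda}-\bar u)$ with a bounded weight, and it keeps all the exponents arising in the bootstrap and in the moving-plane estimates within the admissible ranges of the HLS inequality.
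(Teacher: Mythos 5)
Your overall plan (Kelvin transform, then moving planes in integral form, then classification) agrees with the paper's, but there are two genuine gaps, one of which touches the very reason the paper uses the Kelvin transform.

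First, you begin by asserting that a bootstrap from the hypotheses $(u,v)\in L^{2^{\ast}}_{loc}\times L^{2N/\mu}_{loc}$ yields boundedness, continuity, and the sharp decay $u=O(|x|^{-(N-2)})$, $v=O(|x|^{-\mu})$, and you then lean on this decay both to control the Kelvin transforms at the origin and to close the first moving-plane step. This is not available: local integrability says nothing about behavior at infinity, and the theorem makes no global integrability assumption. The paper emphasizes exactly this point (``there is no decay assumption of $u(x)$ and $v(x)$ at infinity, we are not able to apply the method of moving planes to $u(x)$ and $v(x)$ directly''), which is precisely \emph{why} it passes to the Kelvin transforms $s,t$ and then carries out the entire argument on $\Sigma_\lambda\setminus\{0^\lambda\}$, allowing a possible singularity at the origin; the needed integrability of $s,t$ away from the origin is proved in Lemma~\ref{Inter1} from the \emph{local} hypotheses. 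In the paper, the asymptotic $u(x)=O(|x|^{-(N-2)})$ is \emph{derived after} the symmetry result, by a contradiction argument that uses the established radial symmetry of Kelvin transforms about arbitrary centers. Your proposal inverts this dependency and thus begs the question. A related manifestation is your claim that the weight $\bar u_\lambda^{2^{\ast}_{\mu}-2}$ is ``bounded'': the Kelvin transform may be singular at the origin, and the paper never uses pointwise bounds there, only $L^{2^{\ast}}$- and $L^{2N/\mu}$-norms over $\Sigma^s_\lambda$ and $\Sigma^t_\lambda$ (Lemmas~\ref{Inter2}--\ref{Inter3}), which is what makes the start of the moving-plane procedure work without regularity or decay.

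Second, for identifying the explicit form \eqref{class} you invoke the method of moving spheres or an ODE analysis. This is a legitimate alternative, but it is not what the paper does: the paper proves (Lemma~\ref{U1}) that translations, dilations, and the Kelvin transform map solutions to solutions, derives the decay $u=O(|x|^{-(N-2)})$ \emph{after} symmetry, then builds an auxiliary solution $s_k$ by composing these maps so that $s_k(0)=s_k(e)$; radial symmetry of $s_k$ then forces symmetry about $e/2$, and the image of the hyperplane $x_1=\frac12$ under inversion lands on a sphere, which pins down the explicit formula. If you go the moving-spheres route instead, you should say so more precisely, since as stated it is a sketch that the paper does not rely on. The use of $\mu\le4$ via $2^{\ast}_{\mu}\ge2$ in the mean-value step you identify correctly; this matches the role it plays in the paper's Lemma~\ref{Inter2}.
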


When the paper was finished, we notice that a special case of Theorem \ref{TH} was proved in \cite{Liu} for Hartree equation. For any $u\in L^{2^{\ast}}(\mathbb{R}^{N})$, by the HLS inequality we know that $v\in L^{\frac{2N}{\mu}}(\mathbb{R}^{N})$. Based on the uniqueness theorem for equation \eqref{OPQ}, we have
\begin{Rem}\label{TH2}
Assume that $N\geq3$, $0<\mu<N$, if $N=3\ \hbox{or} \ 4$ while $0<\mu\leq4$ if $N\geq5$, and $2^{\ast}_{\mu}=\frac{2N-\mu}{N-2}$. Let $u\in D^{1,2}(\mathbb{R}^{N})$ be a positive weak solution of equation \eqref{cc}, then there must exist $t, \xi$ such that $u=U_\mu$. Moreover, for fixed $t, \xi$ we have
$$
U_\mu\to U_0, \ \ \hbox{as}\ \ \mu\to N.
$$
\end{Rem}
The next result is about the nondegeneracy of $U_\mu$ as $\mu$ close to $N$. We have
\begin{thm}\label{TH3}
Assume that $0<\mu<N$,  $N=3\ \hbox{or} \ 4$  and $2^{\ast}_{\mu}=\frac{2N-\mu}{N-2}$. Let $\mu$ be close to $N$ then the linearized equation at $U_\mu$ \begin{equation}\label{Linearized E}
-\Delta\psi-2^{\ast}_{\mu}(I_{\mu}\ast(U_\mu^{2^{\ast}_{\mu}-1}\psi))U_\mu^{{2}^{\ast}_{\mu}-1}-(2^{\ast}_{\mu}-1)(I_{\mu}\ast U_\mu^{2^{\ast}_{\mu}})U_\mu^{{2}^{\ast}_{\mu}-2}\psi=0
\end{equation}
only admits solutions in $D^{1,2}(\mathbb{R}^{N})$ of the form
$$
\psi=aD_{t}{U}_\mu+\mathbf{b}\cdot\nabla {U}_\mu,
$$
where $a\in\mathbb{R},\mathbf{b}\in\mathbb{R}^{N}$.
\end{thm}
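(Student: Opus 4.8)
The plan is to prove Theorem \ref{TH3} by a contradiction--perturbation argument, regarding \eqref{Linearized E} as a small perturbation, as $\mu\to N$, of the classical linearised equation at $U_0$, whose nondegeneracy is already known (see the discussion around \eqref{Linearized}). Suppose the assertion fails. Then there are $\mu_k\uparrow N$ and $\psi_k\in D^{1,2}(\mathbb{R}^N)$ solving \eqref{Linearized E} with $\mu=\mu_k$ but not lying in $V_k:=\mathrm{span}\{D_tU_{\mu_k},\partial_{x_1}U_{\mu_k},\dots,\partial_{x_N}U_{\mu_k}\}$. Differentiating $-\Delta U_\mu=(I_\mu\ast U_\mu^{2^\ast_\mu})U_\mu^{2^\ast_\mu-1}$ in the dilation and translation parameters shows that $V_k$ consists of solutions of \eqref{Linearized E}; since \eqref{Linearized E} is linear, subtracting the $D^{1,2}$-orthogonal projection of $\psi_k$ onto $V_k$ produces a \emph{nonzero} solution $\phi_k\perp V_k$, and we normalise so that $\|\phi_k\|_{D^{1,2}}=1$.

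Next I would rewrite \eqref{Linearized E} as the fixed-point problem $\phi_k=T_{\mu_k}\phi_k$, where for $\mu$ near $N$ (including $\mu=N$, with the conventions $I_N\ast g:=g$, $2^\ast_N:=\tfrac{N}{N-2}$ and $U_N:=U_0$) the operator $T_\mu:D^{1,2}(\mathbb{R}^N)\to D^{1,2}(\mathbb{R}^N)$ is
$$T_\mu\phi:=(-\Delta)^{-1}\Big[\,2^\ast_\mu\big(I_\mu\ast(U_\mu^{2^\ast_\mu-1}\phi)\big)U_\mu^{2^\ast_\mu-1}+(2^\ast_\mu-1)\big(I_\mu\ast U_\mu^{2^\ast_\mu}\big)U_\mu^{2^\ast_\mu-2}\phi\,\Big].$$
Using the HLS inequality, the Sobolev inequality and the explicit form of $U_\mu$ (a fixed multiple of $U_0$, with $U_\mu\to U_0$ in $D^{1,2}$ by Lemma \ref{lem1} and Remark \ref{TH2}, $C^\ast(N,\mu)\to1$, $2^\ast_\mu\to\tfrac{N}{N-2}$), one shows that each $T_\mu$ is bounded, self-adjoint and compact on $D^{1,2}(\mathbb{R}^N)$, and that $\mu\mapsto T_\mu$ is continuous in the operator norm up to $\mu=N$; a direct computation gives $T_N\phi=(-\Delta)^{-1}\big[\tfrac{N+2}{N-2}U_0^{4/(N-2)}\phi\big]$, i.e. $T_N$ is the operator associated with the classical linearised equation at $U_0$. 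Here the hypothesis $N=3$ or $4$ is used: it guarantees $2^\ast_\mu-2=\tfrac{4-\mu}{N-2}>0$ for every admissible $\mu$, so all powers of $U_\mu$ entering $T_\mu$ are nonnegative and the corresponding integrals stay uniformly bounded (for $N\ge5$ and $\mu$ near $N$ one would have $2^\ast_\mu-2<0$, and the term $U_\mu^{2^\ast_\mu-2}\phi$ would be uncontrollable near infinity). The convergence $I_\mu\ast g\to g$ is the standard approximate-identity estimate, cf. \cite{Lan}.

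Then I would pass to the limit. Along a subsequence $\phi_k\rightharpoonup\phi_\infty$ in $D^{1,2}(\mathbb{R}^N)$. From $\phi_k=(T_{\mu_k}-T_N)\phi_k+T_N\phi_k$ and $\|T_{\mu_k}-T_N\|\to0$, while $T_N\phi_k\to T_N\phi_\infty$ strongly by compactness of $T_N$, one obtains $\phi_k\to\phi_\infty$ strongly; hence $\|\phi_\infty\|_{D^{1,2}}=1$ and $\phi_\infty=T_N\phi_\infty$, so $\phi_\infty$ is a $D^{1,2}$-solution of $-\Delta\phi_\infty=\tfrac{N+2}{N-2}U_0^{4/(N-2)}\phi_\infty$. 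By the known nondegeneracy of $U_0$, $\phi_\infty\in\mathrm{span}\{D_tU_0,\partial_{x_1}U_0,\dots,\partial_{x_N}U_0\}$. On the other hand $V_k\to\mathrm{span}\{D_tU_0,\partial_{x_1}U_0,\dots,\partial_{x_N}U_0\}$ in $D^{1,2}$ (again since $U_\mu\to U_0$ in the relevant norms) and $\phi_k\perp V_k$, so $\phi_\infty$ is orthogonal to this same span. Therefore $\phi_\infty=0$, contradicting $\|\phi_\infty\|_{D^{1,2}}=1$, and the theorem follows.

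The main obstacle I anticipate is the second step: proving that the family $\{T_\mu\}$, for $\mu$ near $N$, is uniformly compact and converges to $T_N$ in operator norm. This forces one to control the convolution terms $I_\mu\ast(U_\mu^{2^\ast_\mu-1}\phi)$ and $I_\mu\ast U_\mu^{2^\ast_\mu}$ quantitatively as $\mu\to N$ --- splitting the Riesz kernel into a near-diagonal, mollifier-like part and a rapidly decaying tail --- and to exploit the polynomial decay of $U_\mu$ at infinity so that the contribution of the region $\{|x|>R\}$ to $\|T_\mu\|$ is uniformly small; this is precisely what yields simultaneously the compactness and the norm-continuity at $\mu=N$. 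The remaining ingredients --- a priori smoothness and decay of $D^{1,2}$-solutions of \eqref{Linearized E} (needed to treat \eqref{Linearized E} as a pointwise identity and to justify the orthogonal decomposition), together with the fact that $U_\mu$ is an explicit multiple of $U_0$ --- are routine given the subcritical-type structure that the restriction $N=3,4$ provides.
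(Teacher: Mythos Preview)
Your strategy coincides with the paper's: argue by contradiction along a sequence $\mu_k\to N$, take normalised solutions $\phi_k\in D^{1,2}$ orthogonal to the tangent space $V_k$, pass to a limit solving the classical linearised equation $-\Delta\phi=\tfrac{N+2}{N-2}U_0^{4/(N-2)}\phi$, and invoke the known nondegeneracy of $U_0$ to reach a contradiction. The execution differs only in how the limit is shown to be nonzero. You propose operator-norm convergence $T_{\mu_k}\to T_N$, which together with compactness of $T_N$ upgrades weak to strong convergence and gives $\|\phi_\infty\|_{D^{1,2}}=1$ at once. The paper never claims operator-norm convergence; it keeps only $\psi_n\rightharpoonup\psi_0$, proves by hand that each integral in the weak formulation converges (two ``Claims'', combining H\"older, the HLS inequality with $\mu$-uniform constants from Proposition~\ref{Prop}, dominated convergence, and a Fourier-side computation showing $\|I_{\mu_n}\ast g-g\|_{L^2}\to0$ for fixed $g$), and then extracts $\psi_0\neq0$ by testing the linearised equation against $\psi_n$ itself, which yields $\int U_0^4\psi_0^2=\tfrac{1}{5}$ in the case $N=3$. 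Your route is conceptually cleaner, but operator-norm convergence of Riesz convolutions is a genuinely delicate point (approximate identities converge strongly, not in norm, on $L^p$), so the sandwiching by the decaying weights $U_\mu^{2^\ast_\mu-1}$ must be exploited quantitatively, exactly as you anticipate in your ``main obstacle''. One simplification worth recording: since $U_\mu=c_\mu U_0$ with $c_\mu>0$, the tangent space is $\mu$-independent, $V_k=\mathrm{span}\{\partial_tU_0,\partial_1U_0,\dots,\partial_NU_0\}$ for every $k$, so no limiting argument for $V_k$ is needed and the orthogonality $\phi_k\perp V_k$ passes to the limit directly.
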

Before proving the main results, we notice that $u(x)$ is only assumed to be locally integrable and the nonlocal convolution of $u(x)$ under Riesz potential brings a lot of difficulty in estimating the nonlinearity at some point. Based on the above reasons, we will establish an integral inequality in section 2 to ensure that the planes can be moved, then the moving plane method in integral forms are applied to prove the symmetry of the positive solutions. In section 3, We will also investigate the unique form of solutions.  In section 4 we prove the nondegeneracy of the unique solutions when $\mu$ close to $N$.

\section{Symmetry}
The moving plane method was invented by
Alexandrov in the 1950s and has been further developed by many people. Among the existing results, Chen
et al. \cite{CLO1} applied the moving plane method to integral equations and obtained the symmetry,
monotonicity and nonexistence properties of the solutions.
 By virtue of the HLS inequality or
the weighted HLS inequality, moving plane method in integral form can be used to explore various specific features
of the integral equation itself without the help of the maximum principle of
differential equation. We may refer the readers to \cite{CLL, CLZ} for recent progress of the moving plane methods.
Furthermore, the qualitative analysis of the solutions for nonlocal equations has been studied in \cite{CLO3, CL3, LY, Le, LLM}.

Since there is no decay assumption of $u(x)$ and $v(x)$ at infinity, we are not able to apply the method of moving planes to $u(x)$ and $v(x)$ directly. An effective way to overcome the difficulty is to apply the Kelvin-type transform of $u(x),v(x)$ for any $x\neq0$ as
$$
s(x)=\frac{1}{|x|^{N-2}}u\left(\frac{x}{|x|^{2}}\right),~~t(x)=\frac{1}{|x|^{\mu}}v\left(\frac{x}{|x|^{2}}\right),
$$
we know that the origin is the possible singularity of $s(x)$ and $t(x)$.  By means of the Kelvin-type transform, system \eqref{Sys} is equivalent to
\begin{equation}\label{Ksys}
\begin{cases}
s(x)=\displaystyle\int_{\mathbb{R}^{N}}\frac{t(y)s(y)^{{2}^{\ast}_{\mu}-1}}{|x-y|^{N-2}}dy,\vspace{3mm}\\
\displaystyle t(x)=\int_{\mathbb{R}^{N}}\frac{s(y)^{2^{\ast}_{\mu}}}{|x-y|^{\mu}}dy.
\end{cases}
\end{equation}
One may find that the system is invariable under the Kelvin-type transform. To apply the method of moving planes, we introduce the following symbols
$$
x^{\lambda}=(2\lambda-x_{1},...,x_{N});~~s(x^{\lambda})=s_{\lambda}(x);~~t(x^{\lambda})=t_{\lambda}(x)
$$
and the planes
$$
\Sigma_{\lambda}=\{x\in\mathbb{R}^{N}:x_{1}>\lambda\};~~T_{\lambda}=\{x\in\mathbb{R}^{N}:x_{1}=\lambda\};
$$
$$
\Sigma^{s}_{\lambda}=\{x\in\Sigma_{\lambda}-\{{0^{\lambda}}\}|s(x)>s(x^{\lambda})\};~~\Sigma^{t}_{\lambda}=\{x\in\Sigma_{\lambda}-\{{0^{\lambda}}\}|t(x)>t(x^{\lambda})\}.
$$

To apply the method of moving planes, we should first study the integrability of $s(x)$ and $t(x)$.
\begin{lem}\label{Inter1}
For any $\varepsilon>0$, $s\in L^{2^{\ast}}(\mathbb{R}^{N}-B_{\varepsilon}(0))$ and $t\in L^{\frac{2N}{\mu}}(\mathbb{R}^{N}-B_{\varepsilon}(0))$.
\end{lem}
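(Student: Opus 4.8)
The plan is to reduce the claim to the local integrability hypotheses on $(u,v)$ by a single change of variables, exploiting the fact that the weights $|x|^{-(N-2)}$ and $|x|^{-\mu}$ built into the Kelvin-type transforms $s$ and $t$ are exactly the conformal weights for the exponents $2^{\ast}$ and $2N/\mu$, respectively.

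First I would record the elementary properties of the inversion $\Phi(x)=x/|x|^{2}$: it is an involution of $\mathbb{R}^{N}\setminus\{0\}$ with $|\Phi(x)|=1/|x|$ and Jacobian $|\det D\Phi(x)|=|x|^{-2N}$, so that $\Phi$ carries $\mathbb{R}^{N}\setminus B_{\varepsilon}(0)$ onto $B_{1/\varepsilon}(0)\setminus\{0\}$ and, writing $y=\Phi(x)$, one has $dx=|y|^{-2N}\,dy$.

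For $s$, use that $2^{\ast}=2N/(N-2)$ forces $(N-2)\cdot 2^{\ast}=2N$, hence
$$
\int_{\mathbb{R}^{N}\setminus B_{\varepsilon}(0)} s(x)^{2^{\ast}}\,dx
=\int_{\mathbb{R}^{N}\setminus B_{\varepsilon}(0)}\frac{1}{|x|^{2N}}\,u\!\left(\frac{x}{|x|^{2}}\right)^{2^{\ast}}dx
=\int_{B_{1/\varepsilon}(0)} u(y)^{2^{\ast}}\,dy<\infty,
$$
where the middle-to-right step uses $|x|^{-2N}=|y|^{2N}$ together with $dx=|y|^{-2N}\,dy$, and finiteness follows from $u\in L^{2^{\ast}}_{loc}(\mathbb{R}^{N})$ on the bounded set $B_{1/\varepsilon}(0)$. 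The estimate for $t$ is identical: since $\mu\cdot(2N/\mu)=2N$, the same substitution yields $\int_{\mathbb{R}^{N}\setminus B_{\varepsilon}(0)} t(x)^{2N/\mu}\,dx=\int_{B_{1/\varepsilon}(0)} v(y)^{2N/\mu}\,dy<\infty$ because $v\in L^{2N/\mu}_{loc}(\mathbb{R}^{N})$.

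Since this is a routine substitution, I do not expect a genuine obstacle; the only point needing a word of care is that $\Phi$ sends a neighbourhood of infinity to a \emph{punctured} neighbourhood of the origin, so one literally integrates over $B_{1/\varepsilon}(0)\setminus\{0\}$ and then drops the null set $\{0\}$. In particular the integral system \eqref{Ksys} is not needed for this lemma — it is purely a statement about the Kelvin-type transforms and the assumed local integrability of $u$ and $v$.
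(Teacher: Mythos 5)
Your proof is correct and coincides with the paper's argument: both perform the change of variables $y=x/|x|^{2}$ with Jacobian $|x|^{-2N}$, observe that the exponent arithmetic $(N-2)\cdot 2^{\ast}=2N=\mu\cdot\frac{2N}{\mu}$ makes the weight cancel the Jacobian exactly, and then invoke the local $L^{2^{\ast}}$ and $L^{2N/\mu}$ integrability of $u$ and $v$ on $B_{1/\varepsilon}(0)$. Your remark about the punctured ball is a harmless refinement of the paper's slightly looser "bounded domain $\Omega\subset B_{1/\varepsilon}(0)$."
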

\begin{proof} Choose $y=\frac{x}{|x|^{2}}$, then we have $|y|\cdot|x|=1$ and $dy=|x|^{-2N}dx$. For $x\in\mathbb{R}^{N}-B_{\varepsilon}(0)$, there exists a bounded domain $\Omega\subset B_{\frac{1}{\varepsilon}}(0)$, such that $y\in\Omega$. It is easy to prove that
\begin{equation}\nonumber
\begin{aligned}
\int_{\mathbb{R}^{N}-B_{\varepsilon}(0)}s(x)^{2^{\ast}}dx&=\int_{\mathbb{R}^{N}-B_{\varepsilon}(0)}(|x|^{2-N})^{2^{\ast}}u\left(\frac{x}{|x|^{2}}\right)^{2^{\ast}}dx\\
&=\int_{\Omega}u(y)^{2^{\ast}}dy.
\end{aligned}
\end{equation}
Since $u\in L^{2^{\ast}}_{loc}(\mathbb{R}^{N})$, we get the conclusion that $s\in L^{2^{\ast}}(\mathbb{R}^{N}-B_{\varepsilon}(0))$. On the other hand, we can use the HLS inequality to verify that
\begin{equation}\nonumber
\begin{aligned}
\int_{\mathbb{R}^{N}-B_{\varepsilon}(0)}t(x)^{\frac{2N}{\mu}}dx&=\int_{\mathbb{R}^{N}-B_{\varepsilon}(0)}(|x|^{-\mu})^{\frac{2N}{\mu}}v\left(\frac{x}{|x|^{2}}\right)^{\frac{2N}{\mu}}dx\\
&=\int_{\Omega}v(y)^{\frac{2N}{\mu}}dy.
\end{aligned}
\end{equation}
This implies the desire conclusion since $v\in L^{\frac{2N}{\mu}}_{loc}(\mathbb{R}^{N})$.
\end{proof}

First we consider the case $0<\mu<\min\{4,N\}$ and have the following conclusion.
\begin{lem}\label{Inter2}
Suppose that $0<\mu<\min\{4,N\}$, then for any $\lambda>0$ there exists a constant $C>0$ such that
$$
||s-s_{\lambda}||_{L^{2^{\ast}}({\Sigma^{s}_{\lambda}})}\leq C\left[\|t\|_{L^{\frac{2N}{\mu}}(\Sigma^{s}_{\lambda})}\|s\|^{2^{\ast}_{\mu}-2}_{L^{2^{\ast}}(\Sigma^{s}_{\lambda})}
+||s||^{{2}^{\ast}_{\mu}-1}_{L^{2^{\ast}}(\Sigma^{t}_{\lambda})}
||s||^{2^{\ast}_{\mu}-1}_{L^{2^{\ast}}(\Sigma^{s}_{\lambda})}\right]||s-s_{\lambda}||_{L^{2^{\ast}}(\Sigma^{s}_{\lambda})}.
$$
\end{lem}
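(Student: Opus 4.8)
The plan is to convert the two integral identities in the Kelvin-transformed system \eqref{Ksys} into a single self-improving inequality for $\|s-s_{\lambda}\|_{L^{2^{\ast}}(\Sigma^{s}_{\lambda})}$. First I would perform the standard reflection across $T_{\lambda}$ in the first equation of \eqref{Ksys}: split $\mathbb{R}^{N}$ into $\Sigma_{\lambda}$ and its mirror image, change variables $y\mapsto y^{\lambda}$ on the mirrored half, and use $|x-y^{\lambda}|=|x^{\lambda}-y|$. This produces, for $x\in\Sigma_{\lambda}$,
\begin{equation}\nonumber
s(x)-s_{\lambda}(x)=\int_{\Sigma_{\lambda}}\Big(\frac{1}{|x-y|^{N-2}}-\frac{1}{|x-y^{\lambda}|^{N-2}}\Big)\big(t\,s^{2^{\ast}_{\mu}-1}-t_{\lambda}\,s_{\lambda}^{2^{\ast}_{\mu}-1}\big)(y)\,dy ,
\end{equation}
together with the analogous identity for $t-t_{\lambda}$ with kernel $|x-y|^{-\mu}$ and nonlinearity $s^{2^{\ast}_{\mu}}-s_{\lambda}^{2^{\ast}_{\mu}}$. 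Since $|x-y|\le|x-y^{\lambda}|$ for $x,y\in\Sigma_{\lambda}$, the kernel differences are nonnegative, so on $\Sigma_{\lambda}$ we get $0\le s-s_{\lambda}\le\int_{\Sigma_{\lambda}}|x-y|^{2-N}\big(t\,s^{2^{\ast}_{\mu}-1}-t_{\lambda}\,s_{\lambda}^{2^{\ast}_{\mu}-1}\big)^{+}dy$ and, likewise, $0\le t-t_{\lambda}\le\int_{\Sigma_{\lambda}}|x-y|^{-\mu}\big(s^{2^{\ast}_{\mu}}-s_{\lambda}^{2^{\ast}_{\mu}}\big)^{+}dy$.

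Next I would estimate the two nonlinearity differences pointwise. The basic tool is the convexity inequality $a^{p}-b^{p}\le p\,a^{p-1}(a-b)$ for $a\ge b\ge0$, $p\ge1$, which is applicable with $p=2^{\ast}_{\mu}$ and with $p=2^{\ast}_{\mu}-1=\frac{N-\mu+2}{N-2}$ precisely because the hypothesis $\mu<4$ forces $2^{\ast}_{\mu}>2$ (and $\mu<N$ is what makes the Riesz kernels locally integrable at all). Combined with a short case distinction on the signs of $s-s_{\lambda}$ and $t-t_{\lambda}$, this shows that on $\Sigma_{\lambda}$ one has, up to a constant, $\big(t\,s^{2^{\ast}_{\mu}-1}-t_{\lambda}\,s_{\lambda}^{2^{\ast}_{\mu}-1}\big)^{+}\le t\,s^{2^{\ast}_{\mu}-2}(s-s_{\lambda})^{+}+(t-t_{\lambda})^{+}s^{2^{\ast}_{\mu}-1}$, where the first term vanishes outside $\Sigma^{s}_{\lambda}$ and the second outside $\Sigma^{t}_{\lambda}$ (the coefficient being $t$ rather than $t_{\lambda}$ because $t_{\lambda}<t$ on $\Sigma^{s}_{\lambda}\cap\Sigma^{t}_{\lambda}$); and, similarly, $(t-t_{\lambda})^{+}(x)\le C\int_{\Sigma^{s}_{\lambda}}|x-y|^{-\mu}s^{2^{\ast}_{\mu}-1}(s-s_{\lambda})^{+}(y)\,dy$.

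The remaining step is to upgrade these pointwise bounds to the claimed $L^{2^{\ast}}$ inequality. For the first contribution I would apply the Riesz-potential (HLS) estimate $\big\|\,|x-y|^{2-N}\ast g\,\big\|_{L^{2^{\ast}}}\le C\|g\|_{L^{(2^{\ast})'}}$, a consequence of Proposition \ref{HLS}, and then Hölder's inequality to split $t\,s^{2^{\ast}_{\mu}-2}(s-s_{\lambda})^{+}$ over $\Sigma^{s}_{\lambda}$ into $\|t\|_{L^{2N/\mu}}$, $\|s\|^{2^{\ast}_{\mu}-2}_{L^{2^{\ast}}}$ and $\|s-s_{\lambda}\|_{L^{2^{\ast}}}$; the exponents close up exactly because $2^{\ast}_{\mu}-2=\frac{4-\mu}{N-2}$, so that $\tfrac{\mu}{2N}+\tfrac{4-\mu}{2N}+\tfrac{N-2}{2N}=\tfrac{1}{(2^{\ast})'}$. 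For the second contribution I would first estimate $\|(t-t_{\lambda})^{+}\|_{L^{2N/\mu}(\Sigma^{t}_{\lambda})}$ by the HLS estimate for the kernel $|x-y|^{-\mu}$, which maps $L^{2N/(2N-\mu)}$ to $L^{2N/\mu}$, applied to $s^{2^{\ast}_{\mu}-1}(s-s_{\lambda})^{+}$ on $\Sigma^{s}_{\lambda}$, and then feed this into $\big\|\,|x-y|^{2-N}\ast\big[(t-t_{\lambda})^{+}s^{2^{\ast}_{\mu}-1}\big]\,\big\|_{L^{2^{\ast}}}$, splitting again by Hölder; here the powers of $s$ collapse to $\|s\|^{2^{\ast}_{\mu}-1}_{L^{2^{\ast}}(\Sigma^{t}_{\lambda})}$ and $\|s\|^{2^{\ast}_{\mu}-1}_{L^{2^{\ast}}(\Sigma^{s}_{\lambda})}$ because $(2^{\ast}_{\mu}-1)(N-2)=N-\mu+2$. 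Collecting the two contributions gives exactly the stated inequality.

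The part that needs care — more a bookkeeping hazard than a deep obstacle — is the origin singularity introduced by the Kelvin transform: $s_{\lambda}$ and $t_{\lambda}$ blow up at $0^{\lambda}\in\Sigma_{\lambda}$. This is harmless here because $0^{\lambda}$ is deleted from $\Sigma^{s}_{\lambda}$ and $\Sigma^{t}_{\lambda}$ by definition, $\Sigma_{\lambda}$ stays a positive distance from the origin so that $s\in L^{2^{\ast}}(\Sigma_{\lambda})$ and $t\in L^{2N/\mu}(\Sigma_{\lambda})$ by Lemma \ref{Inter1}, and the trivial bounds $(s-s_{\lambda})^{+}\le s$, $(t-t_{\lambda})^{+}\le t$ keep the differences finite near $0^{\lambda}$; thus every integral above converges and the HLS/Hölder manipulations are legitimate. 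What really has to be checked is that each Hölder exponent and each HLS pair is admissible (all exponents strictly between $1$ and $\infty$) and lands precisely on $2^{\ast}$ or $2N/\mu$ — which is exactly what confines the argument to the range $0<\mu<\min\{4,N\}$. Once the inequality is established, letting $\lambda\to+\infty$ (so the norms of $s,t$ over $\Sigma_{\lambda}\supseteq\Sigma^{s}_{\lambda}\cup\Sigma^{t}_{\lambda}$ tend to $0$) makes the bracketed factor smaller than $1$, forcing $\Sigma^{s}_{\lambda}$ to have measure zero, which is the starting point of the moving-plane procedure.
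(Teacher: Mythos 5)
Your argument matches the paper's proof: reflect across $T_{\lambda}$, decompose the nonlinearity difference by cases according to the signs of $s-s_{\lambda}$ and $t-t_{\lambda}$ (the paper's four-way split of $\Sigma_{\lambda}$), bound each piece via the Mean Value Theorem (your convexity inequality), and close with the Riesz-potential HLS estimate followed by H\"older with exactly the same exponent accounting. One harmless slip: your assertion that ``$0\le s-s_{\lambda}$ on $\Sigma_{\lambda}$'' does not follow from the kernel difference being nonnegative (and, taken literally, would make $\Sigma^{s}_{\lambda}$ essentially all of $\Sigma_{\lambda}-\{0^{\lambda}\}$, contradicting the whole point of tracking that set), but you never use it --- only the upper bound $s-s_{\lambda}\le\int_{\Sigma_{\lambda}}|x-y|^{2-N}\bigl(t\,s^{2^{\ast}_{\mu}-1}-t_{\lambda}s_{\lambda}^{2^{\ast}_{\mu}-1}\bigr)^{+}\,dy$ is needed, and that one is correct.
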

\begin{proof}
For any $\lambda>0$,
\begin{equation}\nonumber
\begin{aligned}
s(x)&=\int_{\mathbb{R}^{N}}\frac{t(y)s(y)^{{2}^{\ast}_{\mu}-1}}{|x-y|^{N-2}}dy\\
&=\int_{\Sigma_{\lambda}}\frac{t(y)s(y)^{{2}^{\ast}_{\mu}-1}}{|x-y|^{N-2}}dy
+\int_{\mathbb{R}^{N}-\Sigma_{\lambda}}\frac{t(y)s(y)^{{2}^{\ast}_{\mu}-1}}{|x-y|^{N-2}}dy\\
&=\int_{\Sigma_{\lambda}}\frac{t(y)s(y)^{{2}^{\ast}_{\mu}-1}}{|x-y|^{N-2}}dy
+\int_{\Sigma_{\lambda}}\frac{t(y^{\lambda})s(y^{\lambda})^{{2}^{\ast}_{\mu}-1}}{|x-y^{\lambda}|^{N-2}}dy.
\end{aligned}
\end{equation}
Similarly,
$$
s_{\lambda}(x)=\int_{\Sigma_{\lambda}}\frac{t(y)s(y)^{{2}^{\ast}_{\mu}-1}}{|x^{\lambda}-y|^{N-2}}dy
+\int_{\Sigma_{\lambda}}\frac{t(y^{\lambda})s(y^{\lambda})^{{2}^{\ast}_{\mu}-1}}{|x^{\lambda}-y^{\lambda}|^{N-2}}dy.
$$
Since $|x^{\lambda}-y^{\lambda}|=|x-y|$ and $|x-y^{\lambda}|=|x^{\lambda}-y|$, we know
\begin{equation}\label{Minus}
s(x)-s_{\lambda}(x)=\int_{\Sigma_{\lambda}}\left[\frac{1}{|x-y|^{N-2}}-\frac{1}{|x^{\lambda}-y|^{N-2}}\right]\left[t(y)s(y)^{{2}^{\ast}_{\mu}-1}-t(y^{\lambda})s(y^{\lambda})^{{2}^{\ast}_{\mu}-1}\right]dy.
\end{equation}
If we divide the domain of integration into four disjoint parts as
$$
\Sigma_{\lambda}=\{\Sigma^{s}_{\lambda}\cap\Sigma^{t}_{\lambda}\}\cup\{\Sigma^{s}_{\lambda}-\Sigma^{t}_{\lambda}\}\cup\{\Sigma^{t}_{\lambda}-\Sigma^{s}_{\lambda}\}\cup
\{\Sigma_{\lambda}-\Sigma^{s}_{\lambda}-\Sigma^{t}_{\lambda}\},
$$
we have the following four cases.

1. If $y\in\Sigma^{s}_{\lambda}\cap\Sigma^{t}_{\lambda}$, we apply the Mean Value Theorem to obtain
\begin{equation}\nonumber
\begin{aligned}
ts^{{2}^{\ast}_{\mu}-1}-t_{\lambda}s_{\lambda}^{{2}^{\ast}_{\mu}-1}&=t[s^{{2}^{\ast}_{\mu}-1}-s_{\lambda}^{{2}^{\ast}_{\mu}-1}]+(t-t_{\lambda})s_{\lambda}^{{2}^{\ast}_{\mu}-1}\\
&\leq({2}^{\ast}_{\mu}-1)ts^{{2}^{\ast}_{\mu}-2}(s-s_{\lambda})+(t-t_{\lambda})s^{{2}^{\ast}_{\mu}-1}.
\end{aligned}
\end{equation}

2. If $y\in\Sigma^{s}_{\lambda}-\Sigma^{t}_{\lambda}$, we apply the Mean Value Theorem again to get
\begin{equation}\nonumber
\begin{aligned}
ts^{{2}^{\ast}_{\mu}-1}-t_{\lambda}s_{\lambda}^{{2}^{\ast}_{\mu}-1}&\leq ts^{{2}^{\ast}_{\mu}-1}-ts_{\lambda}^{{2}^{\ast}_{\mu}-1}\\
&=({2}^{\ast}_{\mu}-1)ts^{{2}^{\ast}_{\mu}-2}(s-s_{\lambda}).
\end{aligned}
\end{equation}

3. If $y\in\Sigma^{t}_{\lambda}-\Sigma^{s}_{\lambda}$, then
\begin{equation}\nonumber
\begin{aligned}
ts^{{2}^{\ast}_{\mu}-1}-t_{\lambda}s_{\lambda}^{{2}^{\ast}_{\mu}-1}&\leq ts^{{2}^{\ast}_{\mu}-1}-t_{\lambda}s^{{2}^{\ast}_{\mu}-1}\\
&=(t-t_{\lambda})s^{{2}^{\ast}_{\mu}-1}.
\end{aligned}
\end{equation}

4. If $y\in\Sigma_{\lambda}-\Sigma^{s}_{\lambda}-\Sigma^{t}_{\lambda}$, then
\begin{equation}\nonumber
\begin{aligned}
ts^{{2}^{\ast}_{\mu}-1}-t_{\lambda}s_{\lambda}^{{2}^{\ast}_{\mu}-1}&\leq0.
\end{aligned}
\end{equation}
Then equality \eqref{Minus} implies that for $x\in\Sigma_{\lambda}-\{{0^{\lambda}}\}$,
\begin{equation}\nonumber
\begin{aligned}
s(x)-s_{\lambda}(x)&\leq\int_{\Sigma_{\lambda}}\left[\frac{1}{|x-y|^{N-2}}-\frac{1}{|x^{\lambda}-y|^{N-2}}\right]
\left[({2}^{\ast}_{\mu}-1)ts^{{2}^{\ast}_{\mu}-2}(s-s_{\lambda})\chi_{\{\Sigma^{s}_{\lambda}\cap\Sigma^{t}_{\lambda}\}}
+s^{{2}^{\ast}_{\mu}-1}(t-t_{\lambda})\chi_{\{\Sigma^{s}_{\lambda}\cap\Sigma^{t}_{\lambda}\}}\right]dy\\
&\hspace{4mm}+\int_{\Sigma_{\lambda}}\left[\frac{1}{|x-y|^{N-2}}-\frac{1}{|x^{\lambda}-y|^{N-2}}\right]\left[({2}^{\ast}_{\mu}-1)ts^{{2}^{\ast}_{\mu}-2}(s-s_{\lambda})\chi_{\{\Sigma^{s}_{\lambda}-\Sigma^{t}_{\lambda}\}}\right]dy\\
&\hspace{6mm}+\int_{\Sigma_{\lambda}}\left[\frac{1}{|x-y|^{N-2}}-\frac{1}{|x^{\lambda}-y|^{N-2}}\right]s^{{2}^{\ast}_{\mu}-1}(t-t_{\lambda})\chi_{\{\Sigma^{t}_{\lambda}-\Sigma^{s}_{\lambda}\}}dy\\
&=\int_{\Sigma_{\lambda}}\left[\frac{1}{|x-y|^{N-2}}-\frac{1}{|x^{\lambda}-y|^{N-2}}\right]
\left[({2}^{\ast}_{\mu}-1)ts^{{2}^{\ast}_{\mu}-2}(s-s_{\lambda})\chi_{\{\Sigma^{s}_{\lambda}\}}+s^{{2}^{\ast}_{\mu}-1}(t-t_{\lambda})\chi_{\{\Sigma^{t}_{\lambda}\}}\right]dy\\
&\leq\int_{\Sigma_{\lambda}}\frac{1}{|x-y|^{N-2}}
\left[({2}^{\ast}_{\mu}-1)ts^{{2}^{\ast}_{\mu}-2}(s-s_{\lambda})\chi_{\{\Sigma^{s}_{\lambda}\}}+s^{{2}^{\ast}_{\mu}-1}(t-t_{\lambda})\chi_{\{\Sigma^{t}_{\lambda}\}}\right]dy\\
&=({2}^{\ast}_{\mu}-1)\int_{\Sigma^{s}_{\lambda}}\frac{ts^{{2}^{\ast}_{\mu}-2}(s-s_{\lambda})}{|x-y|^{N-2}}
dy+\int_{\Sigma^{t}_{\lambda}}\frac{s^{{2}^{\ast}_{\mu}-1}(t-t_{\lambda})}{|x-y|^{N-2}}dy.
\end{aligned}
\end{equation}

Involving $t(x)$, we also have
\begin{equation}\nonumber
\begin{aligned}
t(x)&=\int_{\mathbb{R}^{N}}\frac{1}{|x-y|^{\mu}}s(y)^{2^{\ast}_{\mu}}dy\\
&=\int_{\Sigma_{\lambda}}\frac{1}{|x-y|^{\mu}}s(y)^{2^{\ast}_{\mu}}dy+\int_{\mathbb{R}^{N}-\Sigma_{\lambda}}\frac{1}{|x-y|^{\mu}}s(y)^{2^{\ast}_{\mu}}dy\\
&=\int_{\Sigma_{\lambda}}\frac{1}{|x-y|^{\mu}}s(y)^{2^{\ast}_{\mu}}dy+\int_{\Sigma_{\lambda}}\frac{1}{|x-y^{\lambda}|^{\mu}}s(y^{\lambda})^{2^{\ast}_{\mu}}dy.
\end{aligned}
\end{equation}
Thus we have
\begin{equation}\label{IEquality}
t(x)-t(x^{\lambda})=\int_{\Sigma_{\lambda}}\left(\frac{1}{|x-y|^{\mu}}-\frac{1}{|x^{\lambda}-y|^{\mu}}\right)\left[s(y)^{2^{\ast}_{\mu}}-s(y^{\lambda})^{2^{\ast}_{\mu}}\right]dy.
\end{equation}
By the Mean Value Theorem, if $x\in\Sigma_{\lambda}-\{{0^{\lambda}}\}$ then
\begin{equation}\label{Inequality2}
\begin{aligned}
t(x)-t(x^{\lambda})&\leq\int_{\Sigma^{s}_{\lambda}}\frac{1}{|x-y|^{\mu}}[s(y)^{2^{\ast}_{\mu}}-s(y^{\lambda})^{2^{\ast}_{\mu}}]dy\\
&\leq2^{\ast}_{\mu}\int_{\Sigma^{s}_{\lambda}}\frac{1}{|x-y|^{\mu}}s(y)^{2^{\ast}_{\mu}-1}[s(y)-s(y^{\lambda})]dy.
\end{aligned}
\end{equation}

Now by the H\"{o}lder inequality and the HLS inequality, we know that
\begin{equation}\nonumber
\begin{aligned}
\left\|\int_{\Sigma^{s}_{\lambda}}\frac{ts^{{2}^{\ast}_{\mu}-2}(s-s_{\lambda})}{|x-y|^{N-2}}dy\right\|_{L^{2^{\ast}}(\Sigma^{s}_{\lambda})}
&\leq C||ts^{{2}^{\ast}_{\mu}-2}(s-s_{\lambda})||_{L^{\frac{2N}{N+2}}(\Sigma^{s}_{\lambda})}\\
&\leq C\|t\|_{L^{\frac{2N}{\mu}}(\Sigma^{s}_{\lambda})}\|s^{2^{\ast}_{\mu}-2}\|_{L^{\frac{2^{\ast}}{2^{\ast}_{\mu}-2}}(\Sigma^{s}_{\lambda})}||s-s_{\lambda}||_{L^{2^{\ast}}(\Sigma^{s}_{\lambda})}\\
&\leq C\|t\|_{L^{\frac{2N}{\mu}}(\Sigma^{s}_{\lambda})}\|s\|^{2^{\ast}_{\mu}-2}_{L^{2^{\ast}}(\Sigma^{s}_{\lambda})}||s-s_{\lambda}||_{L^{2^{\ast}}(\Sigma^{s}_{\lambda})}.
\end{aligned}
\end{equation}
By inequality \eqref{Inequality2}, we have
\begin{equation}\nonumber
\begin{aligned}
\left\|\int_{\Sigma^{t}_{\lambda}}\frac{s^{{2}^{\ast}_{\mu}-1}(t-t_{\lambda})}{|x-y|^{N-2}}dy\right\|_{L^{2^{\ast}}(\Sigma^{s}_{\lambda})}
&\leq C||s^{{2}^{\ast}_{\mu}-1}(t-t_{\lambda})||_{L^{\frac{2N}{N+2}}(\Sigma^{t}_{\lambda})}\\
&\leq C||s^{{2}^{\ast}_{\mu}-1}||_{L^{\frac{2^{\ast}}{{2}^{\ast}_{\mu}-1}}(\Sigma^{t}_{\lambda})}
||t-t_{\lambda}||_{L^{\frac{2N}{\mu}}(\Sigma^{t}_{\lambda})}\\
&\leq C||s||^{{2}^{\ast}_{\mu}-1}_{L^{2^{\ast}}(\Sigma^{t}_{\lambda})}
||s^{2^{\ast}_{\mu}-1}(s-s_{\lambda})||_{L^{\frac{2^{\ast}}{2^{\ast}_{\mu}}}(\Sigma^{s}_{\lambda})}\\
&\leq C||s||^{{2}^{\ast}_{\mu}-1}_{L^{2^{\ast}}(\Sigma^{t}_{\lambda})}
||s||^{2^{\ast}_{\mu}-1}_{L^{2^{\ast}}(\Sigma^{s}_{\lambda})}||s-s_{\lambda}||_{L^{2^{\ast}}(\Sigma^{s}_{\lambda})}.
\end{aligned}
\end{equation}
Consequently, we know there exists constant $C>0$ such that
\begin{equation}\nonumber
\begin{aligned}
||s-s_{\lambda}||_{L^{2^{\ast}}({\Sigma^{s}_{\lambda}})}&\leq C\left[\left\|\int_{\Sigma^{s}_{\lambda}}\frac{ts^{{2}^{\ast}_{\mu}-2}(s-s_{\lambda})}{|x-y|^{N-2}}dy\right\|_{L^{2^{\ast}}(\Sigma^{s}_{\lambda})}
+\left\|\int_{\Sigma^{t}_{\lambda}}\frac{s^{{2}^{\ast}_{\mu}-1}(t-t_{\lambda})}{|x-y|^{N-2}}dy\right\|_{L^{2^{\ast}}(\Sigma^{s}_{\lambda})}\right]\\
&\leq C\left[\|t\|_{L^{\frac{2N}{\mu}}(\Sigma^{s}_{\lambda})}\|s\|^{2^{\ast}_{\mu}-2}_{L^{2^{\ast}}(\Sigma^{s}_{\lambda})}
+||s||^{{2}^{\ast}_{\mu}-1}_{L^{2^{\ast}}(\Sigma^{t}_{\lambda})}
||s||^{2^{\ast}_{\mu}-1}_{L^{2^{\ast}}(\Sigma^{s}_{\lambda})}\right]||s-s_{\lambda}||_{L^{2^{\ast}}(\Sigma^{s}_{\lambda})}.
\end{aligned}
\end{equation}
\end{proof}
For the case $N>4$, $\mu=4$, one may find that Lemma \ref{Inter2} should be replaced by the following Lemma, the proof is omitted here.
\begin{lem}\label{Inter3}
Suppose that $N>4$ and $\mu=4$, then for any $\lambda>0$ there exists constant $C>0$ such that:
$$
||s-s_{\lambda}||_{L^{2^{\ast}}({\Sigma^{s}_{\lambda}})}\leq C\left[\|t\|_{L^{\frac{N}{2}}(\Sigma^{s}_{\lambda})}
+||s||_{L^{2^{\ast}}(\Sigma^{t}_{\lambda})}
||s||_{L^{2^{\ast}}(\Sigma^{s}_{\lambda})}\right]||s-s_{\lambda}||_{L^{2^{\ast}}(\Sigma^{s}_{\lambda})}.
$$
\end{lem}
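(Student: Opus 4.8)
The plan is to run the proof of Lemma \ref{Inter2} almost verbatim, the decisive simplification being that for $\mu=4$ one has $2^{\ast}_{\mu}=\frac{2N-\mu}{N-2}=2$, so in the Kelvin‑transformed system \eqref{Ksys} the nonlinearities are $t\,s$ (linear in $s$) and $s^{2}$; consequently every appeal to the Mean Value Theorem in Lemma \ref{Inter2} collapses to an algebraic identity. Starting from \eqref{Minus}, which now reads $s(x)-s_{\lambda}(x)=\int_{\Sigma_{\lambda}}\big[|x-y|^{2-N}-|x^{\lambda}-y|^{2-N}\big]\big[t(y)s(y)-t(y^{\lambda})s(y^{\lambda})\big]\,dy$, I would split $\Sigma_{\lambda}$ into the same four regions $\Sigma^{s}_{\lambda}\cap\Sigma^{t}_{\lambda}$, $\Sigma^{s}_{\lambda}\setminus\Sigma^{t}_{\lambda}$, $\Sigma^{t}_{\lambda}\setminus\Sigma^{s}_{\lambda}$, $\Sigma_{\lambda}\setminus(\Sigma^{s}_{\lambda}\cup\Sigma^{t}_{\lambda})$; on each region the bound $ts-t_{\lambda}s_{\lambda}\le t(s-s_{\lambda})\chi_{\Sigma^{s}_{\lambda}}+s(t-t_{\lambda})\chi_{\Sigma^{t}_{\lambda}}$ is immediate from the signs of $s-s_{\lambda}$ and $t-t_{\lambda}$, with no power law needed. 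Using in addition $0\le|x-y|^{2-N}-|x^{\lambda}-y|^{2-N}\le|x-y|^{2-N}$ on $\Sigma_{\lambda}$ and the nonnegativity of the bracket, this gives, for $x\in\Sigma_{\lambda}\setminus\{0^{\lambda}\}$, the pointwise estimate $s(x)-s_{\lambda}(x)\le\int_{\Sigma^{s}_{\lambda}}\frac{t(y)(s(y)-s_{\lambda}(y))}{|x-y|^{N-2}}\,dy+\int_{\Sigma^{t}_{\lambda}}\frac{s(y)(t(y)-t_{\lambda}(y))}{|x-y|^{N-2}}\,dy$. The same reduction in \eqref{IEquality}, together with $s^{2}-s_{\lambda}^{2}\le 2s(s-s_{\lambda})$ on $\Sigma^{s}_{\lambda}$, gives the analogue of \eqref{Inequality2}: $0\le t(x)-t(x^{\lambda})\le 2\int_{\Sigma^{s}_{\lambda}}\frac{s(y)(s(y)-s_{\lambda}(y))}{|x-y|^{4}}\,dy$ for $x\in\Sigma_{\lambda}\setminus\{0^{\lambda}\}$.

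Next I would estimate the resulting convolutions by the Hardy--Littlewood--Sobolev inequality (Proposition \ref{HLS}) combined with H\"older's inequality, tracking the exponents carefully. The kernel $|x-y|^{2-N}$ gives, as in Lemma \ref{Inter2}, boundedness $L^{\frac{2N}{N+2}}(\mathbb{R}^{N})\to L^{2^{\ast}}(\mathbb{R}^{N})$; for the first term H\"older with $\tfrac1{2^{\ast}}+\tfrac2N=\tfrac{N+2}{2N}$ gives $\|t(s-s_{\lambda})\|_{L^{2N/(N+2)}(\Sigma^{s}_{\lambda})}\le\|t\|_{L^{N/2}(\Sigma^{s}_{\lambda})}\|s-s_{\lambda}\|_{L^{2^{\ast}}(\Sigma^{s}_{\lambda})}$, while for the second term H\"older gives $\|s(t-t_{\lambda})\|_{L^{2N/(N+2)}(\Sigma^{t}_{\lambda})}\le\|s\|_{L^{2^{\ast}}(\Sigma^{t}_{\lambda})}\|t-t_{\lambda}\|_{L^{N/2}(\Sigma^{t}_{\lambda})}$. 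To control the last factor I would insert the pointwise bound for $t-t_{\lambda}$ into HLS for the kernel $|x-y|^{-4}$, which is bounded $L^{\frac{N}{N-2}}(\mathbb{R}^{N})\to L^{N/2}(\mathbb{R}^{N})$ exactly when $0<4<N$, i.e.\ $N>4$; one further H\"older step, $\tfrac1{2^{\ast}}+\tfrac1{2^{\ast}}=\tfrac{N-2}{N}$, gives $\|s(s-s_{\lambda})\|_{L^{N/(N-2)}(\Sigma^{s}_{\lambda})}\le\|s\|_{L^{2^{\ast}}(\Sigma^{s}_{\lambda})}\|s-s_{\lambda}\|_{L^{2^{\ast}}(\Sigma^{s}_{\lambda})}$. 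Taking $L^{2^{\ast}}(\Sigma^{s}_{\lambda})$ norms of both sides of the pointwise estimate for $s-s_{\lambda}$ and chaining the three displayed bounds produces exactly the asserted inequality.

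The only step requiring genuine attention — and the reason this case must be separated from Lemma \ref{Inter2} rather than absorbed into it — is the exponent bookkeeping in the HLS estimate for the inner convolution: its kernel is now $|x-y|^{-4}$ in place of $|x-y|^{-\mu}$ with $\mu<4$, and the endpoint $N=4$ has to be excluded because there $|x|^{-4}=|x|^{-N}$ is not locally integrable, so Proposition \ref{HLS} does not apply. Apart from this the argument is a transcription of the computation carried out in detail for Lemma \ref{Inter2}, which is why the paper records the statement without proof; in a written‑out version I would display only the exponent identities above and refer to Lemma \ref{Inter2} for the splitting‑of‑regions argument.
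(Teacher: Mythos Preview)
Your proposal is correct and follows precisely the approach the paper intends: the authors omit the proof, indicating it is a straightforward repetition of the Lemma \ref{Inter2} argument with $2^{\ast}_{\mu}=2$, and your write-up carries this out with the exponents checked correctly (in particular the H\"older splitting $\tfrac{N+2}{2N}=\tfrac{2}{N}+\tfrac{1}{2^{\ast}}$ and the HLS step $L^{N/(N-2)}\to L^{N/2}$ for the kernel $|x-y|^{-4}$). The only remark is that the real reason the case must be stated separately is that in the proof of Lemma \ref{Inter2} the H\"older factor $\|s^{2^{\ast}_{\mu}-2}\|_{L^{2^{\ast}/(2^{\ast}_{\mu}-2)}}$ is meaningless when $2^{\ast}_{\mu}-2=0$; your observation about $|x|^{-4}$ failing to be locally integrable at $N=4$ explains only the hypothesis $N>4$, not the need for a separate lemma.
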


The integral inequalities in Lemma \ref{Inter2} and \ref{Inter3} will allow us to begin the procedure of moving plane methods in integral forms.

We first prove that, for $\lambda$ sufficiently positive,
\begin{equation}\label{S8}
s(x)\leq s(x^{\lambda}),~~~t(x)\leq t(x^{\lambda}),~~\forall x\in\Sigma_{\lambda}-\{0^{\lambda}\}.
\end{equation}
Then we can start moving the plane from near $+\infty$ to the left as long as \eqref{S8} holds.

\begin{lem}\label{S1}
 There exists $\lambda_{0}>0$ such that for any $\lambda\geq\lambda_{0}$, $s(x)\leq s(x^{\lambda})$ and $t(x)\leq t(x^{\lambda})$ hold in $\Sigma_{\lambda}-\{0^{\lambda}\}$.
 \end{lem}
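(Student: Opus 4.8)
The plan is to combine the integral inequality of Lemma~\ref{Inter2} (or Lemma~\ref{Inter3} in the borderline case $N>4$, $\mu=4$) with the integrability away from the origin recorded in Lemma~\ref{Inter1}, exploiting that $\Sigma_{\lambda}$ is pushed out to infinity as $\lambda$ grows. First I would record the elementary inclusion $\Sigma_{\lambda}\subset\mathbb{R}^{N}\setminus B_{\lambda}(0)$, valid for every $\lambda>0$, since $x\in\Sigma_{\lambda}$ forces $|x|\geq x_{1}>\lambda$; in particular $\Sigma^{s}_{\lambda}\cup\Sigma^{t}_{\lambda}\subset\mathbb{R}^{N}\setminus B_{1}(0)$ once $\lambda\geq1$. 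By Lemma~\ref{Inter1}, $s\in L^{2^{\ast}}(\mathbb{R}^{N}\setminus B_{1}(0))$ and $t\in L^{\frac{2N}{\mu}}(\mathbb{R}^{N}\setminus B_{1}(0))$; since these masses are finite and $\Sigma_{\lambda}$ shrinks to the empty set as $\lambda\to+\infty$,
$$
\|s\|_{L^{2^{\ast}}(\Sigma^{s}_{\lambda})}+\|s\|_{L^{2^{\ast}}(\Sigma^{t}_{\lambda})}+\|t\|_{L^{\frac{2N}{\mu}}(\Sigma^{s}_{\lambda})}\longrightarrow 0,\qquad\lambda\to+\infty.
$$

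Consequently, for $\lambda$ large the bracketed coefficient in Lemma~\ref{Inter2}, namely $C\big[\|t\|_{L^{2N/\mu}(\Sigma^{s}_{\lambda})}\|s\|^{2^{\ast}_{\mu}-2}_{L^{2^{\ast}}(\Sigma^{s}_{\lambda})}+\|s\|^{2^{\ast}_{\mu}-1}_{L^{2^{\ast}}(\Sigma^{t}_{\lambda})}\|s\|^{2^{\ast}_{\mu}-1}_{L^{2^{\ast}}(\Sigma^{s}_{\lambda})}\big]$, involves only positive powers of the vanishing norms when $0<\mu<\min\{4,N\}$ and can therefore be made $\leq\tfrac12$; the same is true, with the exponents of Lemma~\ref{Inter3}, when $N>4$ and $\mu=4$. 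Fixing $\lambda_{0}\geq1$ so large that this holds for all $\lambda\geq\lambda_{0}$, Lemma~\ref{Inter2} (resp. Lemma~\ref{Inter3}) gives $\|s-s_{\lambda}\|_{L^{2^{\ast}}(\Sigma^{s}_{\lambda})}\leq\tfrac12\|s-s_{\lambda}\|_{L^{2^{\ast}}(\Sigma^{s}_{\lambda})}$, and since this norm is finite it must vanish. Hence $|\Sigma^{s}_{\lambda}|=0$, i.e. $s(x)\leq s(x^{\lambda})$ for a.e. $x\in\Sigma_{\lambda}-\{0^{\lambda}\}$.

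It remains to propagate this to $t$. From \eqref{IEquality} and the Mean Value Theorem bound \eqref{Inequality2}, for $x\in\Sigma_{\lambda}-\{0^{\lambda}\}$ one has
$$
t(x)-t(x^{\lambda})\leq 2^{\ast}_{\mu}\int_{\Sigma^{s}_{\lambda}}\frac{s(y)^{2^{\ast}_{\mu}-1}\big[s(y)-s_{\lambda}(y)\big]}{|x-y|^{\mu}}\,dy,
$$
whose right-hand side vanishes identically once $|\Sigma^{s}_{\lambda}|=0$; thus $t(x)\leq t(x^{\lambda})$ a.e. in $\Sigma_{\lambda}-\{0^{\lambda}\}$ as well, and upgrading both inequalities to the pointwise statement \eqref{S8} uses the continuity of $s,t$ on $\mathbb{R}^{N}\setminus\{0\}$ coming from the representations in \eqref{Ksys} and Lemma~\ref{Inter1}. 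The main obstacle is the first paragraph: since no decay of $u,v$ at infinity is assumed, the nonlinearity cannot be controlled pointwise, and the argument rests entirely on converting the a priori local integrability of $u,v$ (through the Kelvin transform and Lemma~\ref{Inter1}) into the smallness of the $L^{p}$-masses of $s,t$ over $\Sigma_{\lambda}$; one must also keep track of which of Lemma~\ref{Inter2}/\ref{Inter3} applies according to the range of $\mu$.
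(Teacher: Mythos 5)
Your proof is correct and follows essentially the same route as the paper: invoke the integral inequality of Lemma~\ref{Inter2} (or~\ref{Inter3}), use the integrability from Lemma~\ref{Inter1} to make the bracketed coefficient small for $\lambda$ large so that $\|s-s_{\lambda}\|_{L^{2^{\ast}}(\Sigma^{s}_{\lambda})}=0$, and then propagate to $t$ via \eqref{IEquality}--\eqref{Inequality2}. You simply spell out more explicitly the absolute-continuity argument that $\Sigma_{\lambda}\subset\mathbb{R}^{N}\setminus B_{\lambda}(0)$ forces the relevant norms to vanish as $\lambda\to\infty$, and you correctly flag the borderline exponent $2^{\ast}_{\mu}-2=0$ at $\mu=4$, which is exactly why the paper provides the separate Lemma~\ref{Inter3}.
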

\begin{proof}
 Since $s(x)$ and $t(x)$ is integrable, we can choose $\lambda_{0}$ sufficiently large, such that for $\lambda\geq\lambda_{0}$, we have
$$
C\left[\|t\|_{L^{\frac{2N}{\mu}}(\Sigma^{s}_{\lambda})}\|s\|^{2^{\ast}_{\mu}-2}_{L^{2^{\ast}}(\Sigma^{s}_{\lambda})}
+||s||^{{2}^{\ast}_{\mu}-1}_{L^{2^{\ast}}(\Sigma^{t}_{\lambda})}
||s||^{2^{\ast}_{\mu}-1}_{L^{2^{\ast}}(\Sigma^{s}_{\lambda})}\right]<1.
$$
Consequently Lemma \ref{Inter2} implies that
$$
\left\|s-s_{\lambda}\right\|_{L^{2^{\ast}}(\Sigma^{s}_{\lambda})}=0,
$$
from which we can conclude that the set $\Sigma^{s}_{\lambda}$ is empty. Thus we have $s(x)\leq s(x^{\lambda})$ in $\Sigma_{\lambda}-\{0^{\lambda}\}$. From equality \eqref{IEquality}, we can also prove that $t(x)\leq t(x^{\lambda})$.
\end{proof}

Now we begin to move the plane $T_{\lambda}$ to the left as long as \eqref{S8} holds. If we define
$$
\lambda_{1}=\inf\{~\lambda~|~s(x)\leq s(x^{\eta})~,~t(x)\leq t(x^{\eta})~,~x\in\Sigma_{\eta}-\{0^{\eta}\},~\eta\geq\lambda~\},
$$
then we must have $\lambda_{1}<\infty$. Since the plane can be moved from $+\infty$, we show that
 \begin{lem}\label{S2} For any $\lambda_{1}>0$, we have $s(x)\equiv s(x^{\lambda_{1}})$ and $t(x)\equiv t(x^{\lambda_{1}})$ in $\Sigma_{\lambda_{1}}-\{0^{\lambda_{1}}\}$.
 \end{lem}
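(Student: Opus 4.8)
The plan is to argue by contradiction: assuming that at the critical position $\lambda_1$ one of the two inequalities in \eqref{S8} is strict somewhere, I will show that the plane $T_{\lambda}$ can be pushed a little to the left of $\lambda_1$ while \eqref{S8} is preserved, which contradicts the definition of $\lambda_1$ as an infimum.

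Recall first that, because $\lambda_1$ is the infimum of the admissible $\lambda$'s and $s,t,s_{\lambda},t_{\lambda}$ are continuous on $\Sigma_{\lambda}-\{0^{\lambda}\}$, letting $\lambda\downarrow\lambda_1$ gives $s(x)\le s(x^{\lambda_1})$ and $t(x)\le t(x^{\lambda_1})$ throughout $\Sigma_{\lambda_1}-\{0^{\lambda_1}\}$. Suppose the conclusion fails. Then in fact $s\not\equiv s_{\lambda_1}$ on $\Sigma_{\lambda_1}-\{0^{\lambda_1}\}$: if one had $s\equiv s_{\lambda_1}$ but $t\not\equiv t_{\lambda_1}$, substituting $s\equiv s_{\lambda_1}$ into \eqref{Minus} would leave $\int_{\Sigma_{\lambda_1}}[|x-y|^{2-N}-|x^{\lambda_1}-y|^{2-N}](t-t_{\lambda_1})s^{{2}^{\ast}_{\mu}-1}\,dy$, which is strictly negative while the left-hand side vanishes. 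So assume $s\not\equiv s_{\lambda_1}$. Next I would upgrade these weak inequalities to strict ones: plugging $s\le s_{\lambda_1}$, $t\le t_{\lambda_1}$ into \eqref{Minus} and using that the kernel $|x-y|^{2-N}-|x^{\lambda_1}-y|^{2-N}$ is strictly positive for $x,y\in\Sigma_{\lambda_1}$, the integrand is $\le0$ and is $<0$ on a set of positive measure, hence $s(x)<s_{\lambda_1}(x)$ for \emph{every} $x\in\Sigma_{\lambda_1}-\{0^{\lambda_1}\}$; feeding this into \eqref{IEquality} gives, the same way, $t(x)<t_{\lambda_1}(x)$ everywhere on $\Sigma_{\lambda_1}-\{0^{\lambda_1}\}$.

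The second — and main — step is to conclude from this strictness that $|\Sigma^{s}_{\lambda}|=|\Sigma^{t}_{\lambda}|=0$ for $\lambda$ just below $\lambda_1$. Fix a small $\delta>0$. Using $s\in L^{2^{\ast}}$ and $t\in L^{\frac{2N}{\mu}}$ away from the origin (Lemma \ref{Inter1}), choose $R$ large and $\rho>0$ small so that the $L^{2^{\ast}}$-norm of $s$ and the $L^{\frac{2N}{\mu}}$-norm of $t$ over the set $D:=\{|x|>R\}\cup B_{\rho}(0^{\lambda_1})\cup\{\lambda_1-\rho<x_1<\lambda_1+\rho\}$ are both $<\delta$ (absolute continuity of the integral near $0^{\lambda_1}$ and near $T_{\lambda_1}$, plus integrability at infinity from Lemma \ref{Inter1}). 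On the compact set $K:=\{x:\ |x|\le R,\ |x-0^{\lambda_1}|\ge\rho,\ x_1\ge\lambda_1+\rho\}$ the continuous functions $s_{\lambda_1}-s$ and $t_{\lambda_1}-t$ are strictly positive, hence $\ge c_0$ for some $c_0>0$; by the uniform continuity of $(x,\lambda)\mapsto(s_{\lambda}(x),t_{\lambda}(x))$ on $K\times[\lambda_1-\rho/4,\lambda_1+\rho/4]$ (on which $x^{\lambda}$ stays a fixed distance from the origin), there is $\varepsilon\in(0,\rho)$ such that $s_{\lambda}-s>0$ and $t_{\lambda}-t>0$ on $K$ for every $\lambda\in(\lambda_1-\varepsilon,\lambda_1]$. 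For such $\lambda$ one then has $\Sigma^{s}_{\lambda}\cup\Sigma^{t}_{\lambda}\subset\Sigma_{\lambda}\setminus K\subset D$, so all the norms of $s$ and $t$ entering the bracketed coefficient of Lemma \ref{Inter2} are $O(\delta)$; picking $\delta$ small enough that this coefficient is $<1$, Lemma \ref{Inter2} forces $\|s-s_{\lambda}\|_{L^{2^{\ast}}(\Sigma^{s}_{\lambda})}=0$, whence $|\Sigma^{s}_{\lambda}|=0$, and then \eqref{IEquality} gives $|\Sigma^{t}_{\lambda}|=0$. Since $s,t,s_{\lambda},t_{\lambda}$ are continuous off $0^{\lambda}$, it follows that $s\le s_{\lambda}$ and $t\le t_{\lambda}$ hold on all of $\Sigma_{\lambda}-\{0^{\lambda}\}$ for every $\lambda\in(\lambda_1-\varepsilon,\lambda_1]$, contradicting the minimality of $\lambda_1$. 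Hence $s\equiv s_{\lambda_1}$, and then the first equation of \eqref{Ksys} at once forces $t\equiv t_{\lambda_1}$. In the borderline range $N>4$, $\mu=4$ the identical scheme goes through with Lemma \ref{Inter3} in place of Lemma \ref{Inter2}.

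I expect the hard part to be precisely this second step, the ``narrow-region'' measure-theoretic argument: it has to combine the absolute continuity of $\int|s|^{2^{\ast}}$ and $\int|t|^{\frac{2N}{\mu}}$ over small or remote sets, the uniform positivity of $s_{\lambda_1}-s$ and $t_{\lambda_1}-t$ on compact subsets of the open half-space, and the continuous dependence of $s_{\lambda},t_{\lambda}$ on $\lambda$, all while carefully excising the reflected singularity $0^{\lambda}$, which is harmless only because it is a single point and so contributes nothing to the $L^{p}$-norms in Lemmas \ref{Inter2}--\ref{Inter3}.
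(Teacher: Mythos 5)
Your proof follows the same overall contradiction scheme as the paper (derive strictness $s<s_{\lambda_1}$ from \eqref{Minus}, then show $\Sigma^{s}_{\lambda}$ carries negligible $L^{2^{\ast}}$-mass for $\lambda$ slightly below $\lambda_1$, then invoke Lemma \ref{Inter2}), but the narrow-region step is implemented differently. The paper splits $(\Sigma_{\lambda_1}-\{0^{\lambda_1}\})\cap B_R(0)$ into the sets $P_\varepsilon$, $Q_\varepsilon$ and the strip $\Omega_\lambda$, controls $\cl(\Sigma^{s}_{\lambda}\cap P_\varepsilon)$ by the Chebyshev inequality together with continuity of translation in $L^{2^{\ast}}$, and lets $\varepsilon\to0$, $\lambda\to\lambda_1$; this argument is purely measure-theoretic and uses nothing about $s,t$ beyond the $L^{p}$ integrability of Lemma \ref{Inter1}. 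You instead shrink $\Sigma^{s}_{\lambda}\cup\Sigma^{t}_{\lambda}$ into a small exceptional set $D$ by showing $s_{\lambda_1}-s$ and $t_{\lambda_1}-t$ have a positive minimum $c_0$ on a compact $K$ away from $D$ and then using uniform continuity of $(x,\lambda)\mapsto(s_\lambda(x),t_\lambda(x))$. This is conceptually cleaner, but it relies crucially on $s$ and $t$ being \emph{continuous} away from the singularity. That continuity is not part of the hypotheses of Theorem \ref{TH} (only $L^{2^{\ast}}_{\mathrm{loc}}\times L^{\frac{2N}{\mu}}_{\mathrm{loc}}$ is assumed), and it does not follow directly from the stated integrability: with $s\in L^{2^{\ast}}$ and $t\in L^{\frac{2N}{\mu}}$, the kernel density $t\,s^{2^{\ast}_{\mu}-1}$ lands only in $L^{\frac{2N}{N+2}}$, which is below the $L^{p}$, $p>N/2$, threshold for the Riesz potential to be continuous. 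One would need a separate bootstrap/regularity lemma (in the spirit of Chen–Li–Ou) before your compactness argument is available. So either add such a regularity lemma, or replace the compactness step by the paper's Chebyshev decomposition, which sidesteps the issue; everything else in your write-up (the strictness argument, the discussion of the degenerate case $s\equiv s_{\lambda_1}$ via \eqref{IEquality}, and the final appeal to Lemma \ref{Inter2} or \ref{Inter3}) matches the paper's reasoning.
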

\begin{proof} Suppose~$s(x)\not\equiv s(x^{\lambda_{1}})$. By \eqref{Minus}, we can obtain that in $\Sigma_{\lambda_{1}}-\{0^{\lambda_{1}}\}$ there holds
$$
s(x)-s(x^{\lambda_{1}})<0.
$$
For any $\eta>0$ small, we can choose $R$ sufficiently large such that
\begin{equation}\label{SSS}
C\left[\|t\|_{L^{\frac{2N}{\mu}}(\mathbb{R}^{N}-B_{R}(0))}\|s\|^{2^{\ast}_{\mu}-2}_{L^{2^{\ast}}(\mathbb{R}^{N}-B_{R}(0))}
+||s||^{{2}^{\ast}_{\mu}-1}_{L^{2^{\ast}}(\mathbb{R}^{N}-B_{R}(0))}
||s||^{2^{\ast}_{\mu}-1}_{L^{2^{\ast}}(\mathbb{R}^{N}-B_{R}(0))}\right]<\eta.
\end{equation}
 Fix $\varepsilon>0$ small, we set
$$
P_{\varepsilon}=\{x\in(\Sigma_{\lambda_{1}}-\{0^{\lambda_{1}}\})\cap B_{R}(0)|s(x^{\lambda_{1}})-s(x)>\varepsilon\}
$$
and
$$
Q_{\varepsilon}=\{x\in(\Sigma_{\lambda_{1}}-\{0^{\lambda_{1}}\})\cap B_{R}(0)|s(x^{\lambda_{1}})-s(x)\leq\varepsilon\}.
$$
For $\lambda>\lambda_{1}$, we fix a narrow domain
$$
\Omega_{\lambda}=(\Sigma_{\lambda}-\{0^{\lambda}\}-\Sigma_{\lambda_{1}})\cap B_{R}(0),
$$
then for any $x\in\Sigma^{s}_{\lambda}\cap P_{\varepsilon}$, we know
$$
s(x^{\lambda_{1}})-s(x^{\lambda})>s(x^{\lambda_{1}})-s(x)>\varepsilon.
$$
However if $\lambda\rightarrow\lambda_{1}$, the Chebyshev inequality implies that
$$
\cl(\Sigma^{s}_{\lambda}\cap P_{\varepsilon})\leq\cl(\{x\in B_{R}(0)-\{0^{\lambda}\}|s(x^{\lambda_{1}})-s(x^{\lambda})>\varepsilon\})\rightarrow0,
$$
where $\cl$ is the Lebesgue measure. Now let $\varepsilon\rightarrow0$ and $\lambda\rightarrow\lambda_{1}$, we get
$$
\cl(\Sigma^{s}_{\lambda}\cap B_{R}(0))\leq\cl(\Sigma^{s}_{\lambda}\cap P_{\varepsilon})+\cl(Q_{\varepsilon})+\cl(\Omega_{\lambda})\rightarrow0.
$$
Combining this fact with \eqref{SSS}, we know there exist $\delta>0$ such that for any $\lambda\in[\lambda_{1}-\delta,\lambda_{1}]$,
$$
C\left[\|t\|_{L^{\frac{2N}{\mu}}(\Sigma^{s}_{\lambda})}\|s\|^{2^{\ast}_{\mu}-2}_{L^{2^{\ast}}(\Sigma^{s}_{\lambda})}
+||s||^{{2}^{\ast}_{\mu}-1}_{L^{2^{\ast}}(\Sigma^{t}_{\lambda})}
||s||^{2^{\ast}_{\mu}-1}_{L^{2^{\ast}}(\Sigma^{s}_{\lambda})}\right]<1.
$$
Thus we assert that for all $\lambda\in[\lambda_{1}-\delta,\lambda_{1}]$,
$$
s(x)\leq s(x^{\lambda}),~~~t(x)\leq t(x^{\lambda}),~~\forall x\in\Sigma_{\lambda}-\{0^{\lambda}\}.
$$
This contradicts with the definition of $\lambda_{1}$, therefore \eqref{IEquality} implies $t(x)\equiv t(x^{\lambda_{1}})$.
\end{proof}
Similarly, the plane can also be moved from $-\infty$ to right, therefore we denote the corresponding parameter $\lambda_{1}^{\prime}$ satisfying
$$
\lambda^{\prime}_{1}=\sup\{~\lambda~|~s(x)\leq s(x^{\eta})~,~t(x)\leq t(x^{\eta})~,~x\in\Sigma^{\prime}_{\eta}-\{0^{\eta}\},~\eta\leq\lambda~\}.
$$
Where $\Sigma^{\prime}_{\lambda}=\{x\in\mathbb{R}^{N}:x_{1}<\lambda\}$. If $\lambda^{\prime}_{1}<0$, $s(x)$ and $t(x)$ are also radially symmetric about $T_{\lambda^{\prime}_{1}}$ in the similar argument.

When $\lambda_{1}=\lambda^{\prime}_{1}=0$, then $s(x)$ and $t(x)$ are radially symmetric about $T_{0}$, if it holds for every direction, then $s(x)$ and $t(x)$ must be radially symmetric about origin, that is $u(x)$ and $v(x)$ are radially symmetric about origin.

Otherwise, there exists some direction, might as well choose $x_{1}$ direction, such that $\lambda_{1}=\lambda^{\prime}_{1}\neq0$, then Lemma \ref{S2} implies that $s(x)$ and $t(x)$ are radially symmetric about $T_{\lambda_{1}}$, this implies $s(x)$ and $t(x)$ have no singularity at origin, hence $u(x)$ and $v(x)$ approach to 0 at infinity, then we can use the moving plane methods directly on $u(x)$ and $v(x)$ to get the radial symmetry.

\section{Uniqueness}

In order to prove the uniqueness, we may show that the solutions are invariable under the translation transform, homothetic transform and Kelvin-type transform.
\begin{lem}\label{U1}
Suppose $a\in\mathbb{R}^{N}$, $k\in\mathbb{R}$, $u$ is the positive solution of \eqref{OPQ}, then $u(x+a)$, $k^{\frac{N-2}{2}}u(kx)$ and $\frac{1}{|x|^{N-2}}u(\frac{x}{|x|^{2}})$ are still the positive solutions of equation \eqref{OPQ}.
\end{lem}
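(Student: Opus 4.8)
The plan is to verify, for each of the three transforms, that it preserves the integral equation \eqref{OPQ}; positivity is obviously preserved by all three maps, so only the equation needs checking. The one arithmetic fact used throughout is the critical relation $(N-2)\,2^{\ast}_{\mu}=2N-\mu$ (equivalently $(N-2)(2^{\ast}_{\mu}-1)=N-\mu+2$), which is precisely what forces every power collected in the relevant changes of variables to cancel. One could alternatively invoke the equivalence of \eqref{OPQ} with the conformally invariant equation \eqref{cc}, but the direct substitution is cleaner and self-contained.

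For the translation $u_{a}(x):=u(x+a)$ the check is immediate: substituting $u_{a}$ into the right-hand side of \eqref{OPQ} and changing variables $y\mapsto y-a$, $z\mapsto z-a$ turns it into \eqref{OPQ} evaluated at $x+a$, hence equals $u(x+a)=u_{a}(x)$. For the dilation $u_{k}(x):=k^{\frac{N-2}{2}}u(kx)$ I would substitute $y\mapsto y/k$, $z\mapsto z/k$: the two Jacobians give $k^{-N}$ each, the kernels $|x-y|^{-(N-2)}$ and $|y-z|^{-\mu}$ give $k^{N-2}$ and $k^{\mu}$, and the powers of $u$ carry $k^{\frac{N-2}{2}2^{\ast}_{\mu}}$ and $k^{\frac{N-2}{2}(2^{\ast}_{\mu}-1)}$; collecting exponents and using $(N-2)2^{\ast}_{\mu}=2N-\mu$ leaves exactly $k^{\frac{N-2}{2}}$ times the right-hand side of \eqref{OPQ} at $kx$, i.e. $u_{k}(x)$.

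The Kelvin transform $u^{\ast}(x):=|x|^{-(N-2)}u\big(x/|x|^{2}\big)$ is the one that requires genuine work. The tool is the conformal identity $\big|\,x/|x|^{2}-y/|y|^{2}\,\big|=|x-y|/(|x|\,|y|)$, applied to both kernels, together with the Jacobian $|y|^{-2N}$ of the inversion $y\mapsto y/|y|^{2}$. I would first treat the inner integral: using $u^{\ast}(z)^{2^{\ast}_{\mu}}=|z|^{-(2N-\mu)}u(z/|z|^{2})^{2^{\ast}_{\mu}}$ (this is where $(N-2)2^{\ast}_{\mu}=2N-\mu$ enters) and the substitution $z\mapsto z/|z|^{2}$, one finds that the inner integral, as a function of $y$, equals $|y|^{-\mu}v(y/|y|^{2})$, where $v(w):=\int_{\R^{N}}|w-\zeta|^{-\mu}u(\zeta)^{2^{\ast}_{\mu}}d\zeta$ is exactly the second component of the system \eqref{Sys}. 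Substituting this back and then changing $y\mapsto y/|y|^{2}$ in the outer integral, the conformal identity for $|x-y|^{-(N-2)}$ produces the prefactor $|x|^{-(N-2)}$, while the remaining powers of $|y|$ — namely $N+2$ from the transform factors, $N-2$ from the kernel identity, and $-2N$ from the Jacobian — sum to zero; what is left is $|x|^{-(N-2)}\int_{\R^{N}}\big|\tfrac{x}{|x|^{2}}-w\big|^{-(N-2)}v(w)u(w)^{2^{\ast}_{\mu}-1}dw$, which by \eqref{OPQ} (read through the system \eqref{Sys}) equals $|x|^{-(N-2)}u(x/|x|^{2})=u^{\ast}(x)$.

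The main obstacle, then, is purely the exponent bookkeeping in the Kelvin-transform step: one must track simultaneously the powers of $|x|$, $|y|$ and $|z|$ coming from the definition of the transform, from the two conformal kernel identities, and from the two inversion Jacobians, and confirm that they all cancel. Every one of these cancellations is dictated by the single relation $(N-2)2^{\ast}_{\mu}=2N-\mu$; in particular the invariance is special to the upper critical exponent. No further analytic input — convergence or integrability of the integrals involved — is needed beyond what already holds for solutions of \eqref{OPQ}.
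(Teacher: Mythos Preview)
Your proposal is correct and follows essentially the same route as the paper: for each transform you substitute into \eqref{OPQ}, perform the corresponding change of variables in the double integral, and verify that all exponents cancel thanks to $(N-2)2^{\ast}_{\mu}=2N-\mu$. The only organizational difference is that in the Kelvin step you first reduce the inner integral to $|y|^{-\mu}v(y/|y|^{2})$ via the system \eqref{Sys} and then treat the outer integral, whereas the paper performs the two inversions $y=y'/|y'|^{2}$, $z=z'/|z'|^{2}$ simultaneously in a single chain; the underlying identities and cancellations are identical.
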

\begin{proof} We split the proof into three steps.

Step 1. Assume that $y=y^{\prime}+a$, $z=z^{\prime}+a$, then
\begin{equation}\nonumber
\begin{aligned}
u(x+a)
&=\int_{\mathbb{R}^{N}}\frac{1}{|x+a-y|^{N-2}}\left(\int_{\mathbb{R}^{N}}\frac{u(z)^{2^{\ast}_{\mu}}}{|y-z|^{\mu}}dz\right)u(y)^{{2}^{\ast}_{\mu}-1}dy\\
&=\int_{\mathbb{R}^{N}}\frac{1}{|x+a-(y^{\prime}+a)|^{N-2}}\left(\int_{\mathbb{R}^{N}}\frac{u(z^{\prime}+a)^{2^{\ast}_{\mu}}}{|y^{\prime}+a-(z^{\prime}+a)|^{\mu}}dz^{\prime}\right)u(y^{\prime}+a)^{{2}^{\ast}_{\mu}-1}dy^{\prime}\\
&=\int_{\mathbb{R}^{N}}\frac{1}{|x-y^{\prime}|^{N-2}}\left(\int_{\mathbb{R}^{N}}\frac{u(z^{\prime}+a)^{2^{\ast}_{\mu}}}{|y^{\prime}-z^{\prime}|^{\mu}}dz^{\prime}\right)u(y^{\prime}+a)^{{2}^{\ast}_{\mu}-1}dy^{\prime}.
\end{aligned}
\end{equation}
It is obviously that $u(x+a)$ is solution of \eqref{OPQ}.

Step 2. Assume that $y=ky^{\prime}$, $z=kz^{\prime}$, then
\begin{equation}\nonumber
\begin{aligned}
k^{\frac{N-2}{2}}u(kx)
&=k^{\frac{N-2}{2}}\int_{\mathbb{R}^{N}}\frac{1}{|kx-y|^{N-2}}\left(\int_{\mathbb{R}^{N}}\frac{u(z)^{2^{\ast}_{\mu}}}{|y-z|^{\mu}}dz\right)u(y)^{{2}^{\ast}_{\mu}-1}dy\\
&=k^{\frac{N-2}{2}}\int_{\mathbb{R}^{N}}\frac{1}{|kx-ky^{\prime}|^{N-2}}\left(\int_{\mathbb{R}^{N}}\frac{u(kz^{\prime})^{2^{\ast}_{\mu}}}{|ky^{\prime}-kz^{\prime}|^{\mu}}k^{N}dz^{\prime}\right)u(ky^{\prime})^{{2}^{\ast}_{\mu}-1}k^{N}dy^{\prime}\\
&=\int_{\mathbb{R}^{N}}\frac{1}{|x-y^{\prime}|^{N-2}}\left(\int_{\mathbb{R}^{N}}\frac{\left[k^{\frac{N-2}{2}}u(kz^{\prime})\right]^{2^{\ast}_{\mu}}}{|y^{\prime}-z^{\prime}|^{\mu}}dz^{\prime}\right)\left[k^{\frac{N-2}{2}}u(ky^{\prime})\right]^{{2}^{\ast}_{\mu}-1}dy^{\prime}.
\end{aligned}
\end{equation}
This implies $k^{\frac{N-2}{2}}u(kx)$ is also a solution of \eqref{OPQ}.

Step 3. Assume that $y=\frac{y^{\prime}}{|y^{\prime}|^{2}}$, $z=\frac{z^{\prime}}{|z^{\prime}|^{2}}$, then
\begin{equation}\nonumber
\begin{aligned}
\frac{1}{|x|^{N-2}}u\left(\frac{x}{|x|^{2}}\right)
&=\frac{1}{|x|^{N-2}}\int_{\mathbb{R}^{N}}\frac{1}{|\frac{x}{|x|^{2}}-y|^{N-2}}\left(\int_{\mathbb{R}^{N}}\frac{u(z)^{2^{\ast}_{\mu}}}{|y-z|^{\mu}}dz\right)u(y)^{{2}^{\ast}_{\mu}-1}dy\\
&=\frac{1}{|x|^{N-2}}\int_{\mathbb{R}^{N}}\frac{1}{\left|\frac{x}{|x|^{2}}-\frac{y^{\prime}}{|y^{\prime}|^{2}}\right|^{N-2}}\left(\int_{\mathbb{R}^{N}}\frac{u\left(\frac{z^{\prime}}{|z^{\prime}|^{2}}\right)^{2^{\ast}_{\mu}}}{\left|\frac{y^{\prime}}{|y^{\prime}|^{2}}-\frac{z^{\prime}}{|z^{\prime}|^{2}}\right|^{\mu}}\frac{1}{|z^{\prime}|^{2N}}dz^{\prime}\right)u\left(\frac{y^{\prime}}{|y^{\prime}|^{2}}\right)^{{2}^{\ast}_{\mu}-1}\frac{1}{|y^{\prime}|^{2N}}dy^{\prime}\\
&=\int_{\mathbb{R}^{N}}\frac{1}{\left|x-y^{\prime}\right|^{N-2}}\left(\int_{\mathbb{R}^{N}}\frac{\left[\frac{1}{|z^{\prime}|^{N-2}}u\left(\frac{z^{\prime}}{|z^{\prime}|^{2}}\right)\right]^{2^{\ast}_{\mu}}}{\left|y^{\prime}-z^{\prime}\right|^{\mu}}dz^{\prime}\right)\left[\frac{1}{|y^{\prime}|^{N-2}}u\left(\frac{y^{\prime}}{|y^{\prime}|^{2}}\right)\right]^{{2}^{\ast}_{\mu}-1}dy^{\prime}.
\end{aligned}
\end{equation}
Thus we conclude that $\frac{1}{|x|^{N-2}}u(\frac{x}{|x|^{2}})$ is still a solution of \eqref{OPQ}.
 \end{proof}

{\flushleft{\bf Proof of Theorem \ref{TH} completed.}} Since it was proved that the solution is radially symmetric, next we are going to show that the solution assumes the form \eqref{class}. Now we will show that the positive solution behaves at infinity as
$$
u(x)=O\left(\frac{1}{|x|^{N-2}}\right).
$$
Otherwise, we suppose that the contrary holds. For any $x_{0}\in\mathbb{R}^{N}$, we take the Kelvin-type transform centered at $x_{0}$ by
$$
s_{0}(x)=\frac{1}{|x-x_{0}|^{N-2}}u\left(\frac{x-x_{0}}{|x-x_{0}|^{2}}+x_{0}\right).
$$
When $x\rightarrow x_{0}$, we realize $x_{0}$ must be singularity of $s_{0}$. Based on the arguments above, we conclude that $s_{0}$ must be radially symmetric about $x_{0}$, thus we reach the conclusion that $u(x)$ must be constant, however, that is impossible.

Define
$$
u_{\infty}=\lim_{x\rightarrow\infty}|x|^{N-2}u(x),
$$
clearly, $u_{\infty}$ is significative. We choose proper $k$ that satisfies
$$
k=\left(\frac{u_{\infty}}{u(0)}\right)^{\frac{1}{N-2}}
$$
and define
$$
u_{k}(x)=k^{\frac{N-2}{2}}u(kx).
$$
Suppose the given unit vector $e=(1,0,...,0)$ and
$$
s_{k}(x)=\frac{1}{|x|^{N-2}}u_{k}\left(\frac{x}{|x|^{2}}-e\right).
$$
From lemma \ref{U1}, we know $s_k(x)$ is a solution of \eqref{OPQ}.

From the simple calculation we can get
$$
s_{k}(e)=u_{k}(0)=k^{\frac{N-2}{2}}u(0).
$$
On the other hand, since
\begin{equation}\nonumber
\begin{aligned}
s_{k}(0)&=\lim_{x\rightarrow0}\frac{1}{|x|^{N-2}}u_{k}\left(\frac{x}{|x|^{2}}-e\right)\\
&=\lim_{x\rightarrow0}\frac{1}{|x|^{N-2}}k^{\frac{N-2}{2}}u\left[k\left(\frac{x}{|x|^{2}}-e\right)\right]\\
&=k^{\frac{2-N}{2}}\lim_{x\rightarrow\infty}|kx|^{N-2}u\left[k(x-e)\right]\\
&=k^{\frac{2-N}{2}}u_{\infty},
\end{aligned}
\end{equation}
we know $s_{k}(0)=s_{k}(e)$. Since $s_{k}(x)$ is radially symmetric, hence $s_{k}(x)$ must be symmetric about $\frac{e}{2}$.

Furthermore, we are able to map the point on $x_{1}=\frac{1}{2}$ to the unit centered at $e$. Let
$$
x^{\prime}=\left\{x\in\mathbb{R}^{N}|x_{1}=\frac{1}{2}\right\}
$$
then
$$
\left|\frac{x^{\prime}}{|x^{\prime}|^{2}}-e\right|^{2}=\frac{1-2e x^{\prime}}{|x^{\prime}|^{2}}+1=1,
$$
which implies that
$$
s_{k}(x^{\prime})=\frac{1}{|x^{\prime}|^{N-2}}u_{k}\left(\frac{x^{\prime}}{|x^{\prime}|^{2}}-e\right)=\frac{c}{|x^{\prime}|^{N-2}},
$$
where $c$ denote the value of $u_{k}(x)$ on unit centered at origin. Thus we can conclude that for any $x\in\mathbb{R}^{N}$,
$$
s_{k}(x)=\frac{c}{(\frac{1}{4}+|x-\frac{e}{2}|^{2})^\frac{{N-2}}{2}}.
$$
Apparently $s_{k}(x)$ satisfies the form \eqref{class}. Since $s_{k}(x)$ is the composite transformation of $u(x)$, then $u(x)$ also assumes the form \eqref{class}. $\hfill{} \Box$

\section{Nondegeneracy}

We introduce the following equivalent form of the HLS inequality with Riesz potential.
\begin{Prop}\label{Prop}
Let $1\leq r<s<\infty$ and $0<\mu<N$ satisfy
$$
\frac{1}{r}-\frac{1}{s}=\frac{N-\mu}{N}.
$$
Then for $\mu$ sufficient close to $N$, there exists constant $C(N,r)>0$ such that for any $f\in L^{r}(\mathbb{R}^{N})$, there holds
\begin{equation}\label{RFV}
||I_{\mu}\ast f||_{L^{s}(\mathbb{R}^{N})}\leq C(N,r)||f||_{L^{r}(\mathbb{R}^{N})}.
\end{equation}
\end{Prop}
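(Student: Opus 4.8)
Write $\alpha:=N-\mu$, so $\alpha\to0^{+}$ as $\mu\to N$. The plan is to reduce to the unnormalised Riesz kernel and to keep track of how the constants depend on $\alpha$. By definition $I_{\mu}(x)=c(N,\mu)\,|x|^{-\mu}$ with
\[
c(N,\mu)=\frac{\Gamma(\mu/2)}{\Gamma(\alpha/2)\,\pi^{N/2}\,2^{\alpha}},
\]
and since $\Gamma(\alpha/2)=\tfrac{2}{\alpha}\Gamma(1+\tfrac{\alpha}{2})$ one gets $c(N,\mu)\le C_{N}\alpha$ for all $\mu$ close to $N$. Hence \eqref{RFV} will follow once I prove that, for $1<r<s<\infty$ with $\tfrac1r-\tfrac1s=\tfrac\alpha N$, the bare Riesz operator satisfies
\[
\bigl\|\,|\cdot|^{-\mu}\ast f\,\bigr\|_{L^{s}(\mathbb{R}^{N})}\ \le\ \frac{C(N,r)}{N-\mu}\,\|f\|_{L^{r}(\mathbb{R}^{N})},
\]
with $C(N,r)$ \emph{independent of $\mu$}: the blow-up $(N-\mu)^{-1}$ then cancels the vanishing factor $c(N,\mu)\asymp N-\mu$. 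I treat only $1<r<s<\infty$, which is the range used afterwards; the genuine endpoint $r=1$ lies outside the maximal-function argument below and, if needed, requires the weak-type refinement (using that $\|\,|x|^{-\mu}\,\|_{L^{N/\mu,\infty}}$ stays bounded as $\mu\to N$).

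For the displayed estimate I would run the classical maximal-function proof of Hardy--Littlewood--Sobolev while making the $\alpha$-dependence explicit. Split, for $R>0$,
\[
(|\cdot|^{-\mu}\ast f)(x)=\int_{|y|<R}|y|^{-\mu}|f(x-y)|\,dy+\int_{|y|\ge R}|y|^{-\mu}|f(x-y)|\,dy=:I(x)+II(x).
\]
Decomposing $\{|y|<R\}$ dyadically and inserting the Hardy--Littlewood maximal function $Mf$ gives $I(x)\le C_{N}(1-2^{-\alpha})^{-1}R^{\alpha}Mf(x)\le \tfrac{C_{N}}{\alpha}R^{\alpha}Mf(x)$ for small $\alpha$ (here the factor $(1-2^{-\alpha})^{-1}\sim(\alpha\log 2)^{-1}$ is where $(N-\mu)^{-1}$ is born). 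H\"older's inequality gives $II(x)\le C(N,r)(\mu r'-N)^{-1/r'}R^{\alpha-N/r}\|f\|_{r}$, and since $\mu r'-N\to\tfrac{N}{r-1}>0$ this prefactor stays bounded as $\mu\to N$. Optimising over $R$ by balancing the two terms yields the pointwise bound
\[
(|\cdot|^{-\mu}\ast f)(x)\ \le\ \frac{C(N,r)}{\alpha}\,\bigl(Mf(x)\bigr)^{1-\gamma}\,\|f\|_{r}^{\gamma},\qquad \gamma:=\frac{r\alpha}{N}\in(0,1).
\]
Taking the $L^{s}$ norm, one uses the identity $(1-\gamma)s=r$ together with the $L^{r}$-boundedness of $M$ (valid since $r>1$) and $\alpha\log(1/\alpha)\to0$ to recover exactly the $(N-\mu)^{-1}$ bound above. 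Multiplying through by $c(N,\mu)\le C_{N}(N-\mu)$ then gives $\|I_{\mu}\ast f\|_{L^{s}}\le C(N,r)\|f\|_{L^{r}}$, which is \eqref{RFV}.

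The only real difficulty is the \emph{uniformity in $\mu$}, and it amounts to two bookkeeping points: (i) one must extract the sharp rate $(N-\mu)^{-1}$ from the HLS estimate, which the maximal-function proof produces for free (whereas quoting an off-the-shelf HLS constant would obscure it), the reason being that as $\mu\to N$ the admissible pair $(r,s)$ degenerates to the diagonal, where convolution with $|x|^{0}$ is unbounded; and (ii) one must pair this with the asymptotics $\Gamma(\alpha/2)\sim 2/\alpha$ of the Riesz normalisation, so that the two effects cancel exactly. Everything else is routine. As an alternative to this real-variable argument one could apply Stein's analytic interpolation to the family $z\mapsto(-\Delta)^{-z/2}$ on the strip $0\le\Re z\le\alpha_{0}$, using boundedness of the imaginary powers $(-\Delta)^{-it/2}$ on $L^{p}$ at $\Re z=0$ and ordinary HLS at $\Re z=\alpha_{0}$, and noting that the interpolated operator norm tends to the $\Re z=0$ value $1$ as $\mu\to N$; but the elementary computation above is more transparent and self-contained, so I would present that.
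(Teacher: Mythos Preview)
Your argument is correct and follows the same overall strategy as the paper: both proofs factor $I_{\mu}=c(N,\mu)\,|\cdot|^{-\mu}$, observe that the Riesz normalisation satisfies $c(N,\mu)\asymp N-\mu$ via $\Gamma(\tfrac{N-\mu}{2})\sim\tfrac{2}{N-\mu}$, and cancel this against the $(N-\mu)^{-1}$ blow-up of the bare Hardy--Littlewood--Sobolev constant as $\mu\to N$.

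The difference is only in how the HLS asymptotic is handled. The paper simply \emph{asserts} that
\[
\limsup_{\mu\to N}(N-\mu)\,K(N,\mu,r)\ \le\ \frac{2}{r(r-1)}\,|\mathbb{S}^{N-1}|
\]
and moves on, whereas you actually \emph{derive} the rate $K(N,\mu,r)\lesssim (N-\mu)^{-1}$ by running Hedberg's maximal-function proof and locating the blow-up in the dyadic sum $(1-2^{-\alpha})^{-1}\sim(\alpha\log 2)^{-1}$. Your optimisation step, the identity $(1-\gamma)s=r$, and the observation that $\alpha^{\gamma}\to 1$ are all correct. So your version is genuinely more self-contained; the paper's proof is essentially a two-line sketch that presupposes the very estimate you prove. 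Your explicit restriction to $r>1$ (needed for the $L^{r}$-boundedness of $M$) is appropriate, since only this range is used in the subsequent nondegeneracy argument.
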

\begin{proof}
Due to the equivalent form of Hardy-littlewood-Sobolev inequality, we have
$$
||\frac{1}{|\cdot|^{\mu}}\ast f||_{L^{s}(\mathbb{R}^{N})}\leq K(N,\mu,r)||f||_{L^{r}(\mathbb{R}^{N})}.
$$
Where $K(n,\mu,r)$ satisfies
$$
\limsup_{\mu\rightarrow N}(N-\mu)K(N,\mu,r)\leq\frac{2}{r(r-1)}|\mathbb{S}^{N-1}|.
$$
Then \eqref{RFV} holds because of $
\Gamma(\frac{N-\mu}{2})\sim\frac{1}{N-\mu}$ as $\mu\rightarrow N$.
\end{proof}
Notice the assumptions in Theorem \ref{TH} that $0<\mu<N$, if $N=3\ \hbox{or} \ 4$ and $0<\mu\leq4$ if $N\geq5$, we can only prove the nondegeneracy of $U_\mu(x)$ to \eqref{cc} when $\mu$ is close to  $N=3\ \hbox{or} \ 4$. And here we only give the proof The corresponding Euler-Lagrange functional of \eqref{cc} is
$$
I_{\mu}(u)=\frac{1}{2}\int_{\mathbb{R}^{3}}|\nabla u|^{2}dx-\frac{1}{2\cdot2^{\ast}_{\mu}}\int_{\mathbb{R}^{3}}(I_{\mu}\ast u^{2^{\ast}_{\mu}})u^{{2}^{\ast}_{\mu}}dx.
$$
and the differential of $I_{\mu}(u)$ is
$$
<I^{\prime}_{\mu}(u),\nu>=\int_{\mathbb{R}^{3}}\nabla u\nabla \nu dx-\int_{\mathbb{R}^{3}}(I_{\mu}\ast u^{2^{\ast}_{\mu}})u^{{2}^{\ast}_{\mu}-1}\nu dx,~~\nu\in D^{1,2}(\mathbb{R}^{3}).
$$
Due to Remark \ref{TH2}, the functional $I_{\mu}(u)$ possesses the family of critical points, depending on 4-parameters $\xi\in\mathbb{R}^{3}$ and $t\in\mathbb{R}^{+}$,
$$
z_{t,\xi,\mu}=S^{\frac{(N-\mu)(2-N)}{4(N-\mu+2)}}C^{\ast}(N,\mu)^{\frac{2-N}{2(N-\mu+2)}}[\frac{\sqrt{N(N-2)}t}{(t^{2}+|x-\xi|^{2})}]^{\frac{N-2}{2}}.
$$
Then we define $Z_{\mu}$ is the 4-dimensional critical manifold of $I_{\mu}(u)$ which satisfies
$$
Z_{\mu}=\{z_{t,\xi,\mu}|t>0,\xi\in\mathbb{R}^{3}\},
$$
and
$T_{z}Z_{\mu}$ is the tangent space to $Z_{\mu}$. We have
$$
<I^{\prime\prime}_{\mu}(z)[v],\phi>=0,~~\forall v\in T_{z}Z_{\mu},~~\forall\phi\in D^{1,2}(\mathbb{R}^{3}).
$$

If $\mu$ is close to $0$ or $N$, the nondegeneracy of the ground states of the subcritical Chouquard equation was studied in \cite{SJ}. Inspired by \cite{SJ}, we are going to prove that $Z_{\mu}$ satisfies the nondegeneracy condition when $\mu\rightarrow 3$.

{\bf Proof of Theorem \ref{TH3}.}
Fix $t, \xi$, since any $U_\mu\in Z_{\mu}$ satisfies \eqref{cc}, we can conclude that any $\partial_{t}U_\mu=\frac{\partial U_\mu}{\partial t}$, $\partial_{i}U_\mu=\frac{\partial U_\mu}{\partial x_{i}}\in T_{z}{Z_{\mu}}(i=1,2,3)$ satisfy the following equation:
\begin{equation}\label{A}
A_{\mu}(\psi)=-\Delta\psi-2^{\ast}_{\mu}(I_{\mu}\ast(U_\mu^{2^{\ast}_{\mu}-1}\psi))U_\mu^{{2}^{\ast}_{\mu}-1}-(2^{\ast}_{\mu}-1)(I_{\mu}\ast U_\mu^{2^{\ast}_{\mu}})U_\mu^{{2}^{\ast}_{\mu}-2}\psi=0,~~\psi\in L^{2^{\ast}}(\mathbb{R}^{3}).
\end{equation}
It is obvious that $T_{z}{Z_{\mu}}\subseteq\mathbf{Ker}{[I_{\mu}^{\prime\prime}(z)]}$, next we show that $\mathbf{Ker}{[I_{\mu}^{\prime\prime}(z)]}\subseteq T_{z}{Z_{\mu}}$. Noting that $U_\mu=cU_0$ and recalling the finite dimensional vector space
$$
T_{z}{Z_{\mu}}:=\mathbf{span}\{\partial_{1}U_\mu,\partial_{2}U_\mu,\partial_{3}U_\mu,\partial_{t}U_\mu\}=\mathbf{span}\{\partial_{1}U_0,\partial_{2}U_0,\partial_{3}U_0,\partial_{t}U_0\},
$$
On the contrary, we suppose that there exists a sequence $\{\mu_{n}\}$ with $\mu_{n}\rightarrow 3$ as $n\rightarrow\infty$ and for each $\mu_{n}$ we have nontrivial solution $\psi_{n}$ of \eqref{A} in the topological complement of $T_{z}{Z_{\mu}}$ in $L^{2^*}({\mathbb{R}^{3}})$.

We define the functional
$$
L[\psi]=2^{\ast}_{\mu}(I_{\mu}\ast(U_\mu^{2^{\ast}_{\mu}-1}\psi))U_\mu^{{2}^{\ast}_{\mu}-1}+(2^{\ast}_{\mu}-1)(I_{\mu}\ast U_\mu^{2^{\ast}_{\mu}})U_\mu^{{2}^{\ast}_{\mu}-2}\psi,
$$
then for any $\varphi\in D^{1,2}(\mathbb{R}^{3})$, H$\ddot{o}$lder inequality and Proposition \ref{Prop} are applied to imply
\begin{equation}\nonumber
\begin{aligned}
|<L[\psi],\varphi>|&=|2^{\ast}_{\mu}\int_{\mathbb{R}^{3}}(I_{\mu}\ast(U_\mu^{2^{\ast}_{\mu}-1}\psi))U_\mu^{{2}^{\ast}_{\mu}-1}\varphi dx+(2^{\ast}_{\mu}-1)\int_{\mathbb{R}^{3}}(I_{\mu}\ast U_\mu^{2^{\ast}_{\mu}})U_\mu^{{2}^{\ast}_{\mu}-2}\psi\varphi dx|\\
&\leq2^{\ast}_{\mu}|\int_{\mathbb{R}^{3}}(I_{\mu}\ast(U_\mu^{2^{\ast}_{\mu}-1}\psi))U_\mu^{{2}^{\ast}_{\mu}-1}\varphi dx|+(2^{\ast}_{\mu}-1)|\int_{\mathbb{R}^{3}}(I_{\mu}\ast U_\mu^{2^{\ast}_{\mu}})U_\mu^{{2}^{\ast}_{\mu}-2}\psi\varphi dx|\\
&\leq C||U_\mu^{{2}^{\ast}_{\mu}-1}||_{L^{\frac{3}{2}}}||U_\mu^{{2}^{\ast}_{\mu}-1}||_{\frac{3}{3-\mu}}||\psi||_{L^{2^{\ast}}}||\varphi||_{L^{2^{\ast}}}+||U_\mu^{{2}^{\ast}_{\mu}}||_{L^{\frac{3}{4-\mu}}}||U_\mu^{{2}^{\ast}_{\mu}-2}||_{L^{3}}||\psi||_{L^{2^{\ast}}}||\varphi||_{L^{2^{\ast}}}\\
&=C[||U_\mu||^{{2}^{\ast}_{\mu}-1}_{L^{\frac{3(5-\mu)}{2}}}||U_\mu||^{{2}^{\ast}_{\mu}-1}_{L^{\frac{3(5-\mu)}{3-\mu}}}+||U_\mu||^{{2}^{\ast}_{\mu}}_{L^{\frac{3(6-\mu)}{4-\mu}}}||U_\mu||^{{2}^{\ast}_{\mu}-2}_{L^{3(4-\mu)}}]||\psi||_{L^{2^{\ast}}}||\varphi||_{D^{1,2}}.
\end{aligned}
\end{equation}
Where we use the regularity of $U_\mu\in L^{p}(\mathbb{R}^{3}), (3<p\leq\infty)$ and $\psi\in L^{2^{\ast}}(\mathbb{R}^{3})$. Therefore we find that the functional $L[\psi]\in(D^{1,2}(\mathbb{R}^{3}))^{\ast}$ where $(D^{1,2}(\mathbb{R}^{3}))^{\ast}$ denote the dual space of $D^{1,2}(\mathbb{R}^{3})$. Since $-\Delta\psi\in(D^{1,2}(\mathbb{R}^{3}))^{\ast}$, then we achieve that $\psi\in D^{1,2}(\mathbb{R}^{3})$.

Now we may assume that $\psi_{n}$ is a sequence of unit solution for the linearized equation at $U_{n}:=U_{\mu_n}$, hence there exists $\psi_{0}\in D^{1,2}(\mathbb{R}^{3})$, such that $\psi_{n}\rightharpoonup\psi_{0}$ in $D^{1,2}(\mathbb{R}^{3})$ as $n\rightarrow\infty$. Consequently for any $\varphi\in D^{1,2}(\mathbb{R}^{3})$,
\begin{equation}\label{B}
\begin{aligned}
\int_{\mathbb{R}^{3}}\nabla\psi_n\nabla\varphi dx=2^{\ast}_{\mu_n}\int_{\mathbb{R}^{3}}\big(I_{\mu_{n}}\ast(U^{2^{\ast}_{\mu_{n}}-1}_{n}\psi_n)\big)U^{2^{\ast}_{\mu_{n}}-1}_{n}\varphi dx+(2^{\ast}_{\mu_n}-1)\int_{\mathbb{R}^{3}}\big(I_{\mu_{n}}\ast(U^{2^{\ast}_{\mu_{n}}}_n)\big)U^{2^{\ast}_{\mu_{n}}-2}_{n}\psi_n\varphi dx.
\end{aligned}
\end{equation}

We claim that
\begin{equation}\label{Claim1}
\int_{\mathbb{R}^{3}}(I_{\mu_{n}}\ast(U^{2^{\ast}_{\mu_{n}}-1}_{n}\psi_{n}))U^{2^{\ast}_{\mu_{n}}-1}_{n}\varphi dx\rightarrow\int_{\mathbb{R}^{3}}U_{0}^{4}\psi_{0}\varphi dx, \ \ \hbox{as} \ \ n\to \infty.
\end{equation}
In fact, notice that
$$
I_{\mu_{n}}\ast(U^{2^{\ast}_{\mu_{n}}-1}_{n}\psi_{n})-U^{2^{\ast}_{\mu_{n}}-1}_{n}\psi_{n}
=(I_{\mu_{n}}\ast(U^{2^{\ast}_{\mu_{n}}-1}_{n}\psi_{n}-U^{2}_{0}\psi_{0}))
+(I_{\mu_{n}}\ast(U^{2}_{0}\psi_{0})-U^{2}_{0}\psi_{0})
+(U^{2}_{0}\psi_{0}-U^{2^{\ast}_{\mu_{n}}-1}_{n}\psi_{n}),
$$
then we can estimate the integral in three parts.

First, for $0<\varepsilon<1$ small enough, we have
\begin{equation}\nonumber
\begin{aligned}
&\int_{\mathbb{R}^{3}}\big(I_{\mu_{n}}\ast(U^{2^{\ast}_{\mu_{n}}-1}_{n}\psi_{n}-U^{2}_{0}\psi_{0})\big)U^{2^{\ast}_{\mu_{n}}-1}_{n}\varphi dx\\
&\hspace{6mm}\leq||I_{\mu_{n}}\ast(U^{2^{\ast}_{\mu_{n}}-1}_{n}\psi_{n}-U^{2}_{0}\psi_{0})||_{L^{\frac{6}{2\mu_{n}-5+2\varepsilon}}}
||U^{2^{\ast}_{\mu_{n}}-1}_{n}||_{L^{\frac{3}{5-\mu_{n}-\varepsilon}}}||\varphi||_{L^{2^{\ast}}}\\
&\hspace{6mm}\leq C||U^{2^{\ast}_{\mu_{n}}-1}_{n}\psi_{n}-U^{2}_{0}\psi_{0}||_{L^{\frac{6}{1+2\varepsilon}}}
||U_{n}||^{{2^{\ast}_{\mu_{n}}-1}}_{L^{\frac{3(5-\mu_{n})}{(5-\mu_{n}-\varepsilon)}}}||\varphi||_{L^{2^{\ast}}}.
\end{aligned}
\end{equation}
By the decay estimates of $U_0$ and the local compact embedding, Remark \ref{TH2} and the Dominated Convergence Theorem imply that
\begin{equation}\nonumber
\begin{aligned}
&||U^{2^{\ast}_{\mu_{n}}-1}_{n}\psi_{n}-U^{2}_{0}\psi_{0}||_{L^{\frac{6}{1+2\varepsilon}}}\\
&\hspace{6mm}\leq||U^{2^{\ast}_{\mu_{n}}-1}_{n}\psi_{n}-U^{2}_{0}\psi_{n}||_{L^{\frac{6}{1+2\varepsilon}}}
+||U^{2}_{0}\psi_{n}-U^{2}_{0}\psi_{0}||_{L^{\frac{6}{1+2\varepsilon}}}\\
&\hspace{6mm}\leq||U^{2^{\ast}_{\mu_{n}}-1}_{n}-U^{2}_{0}||_{L^{\frac{3}{\varepsilon}}}||\psi_{n}||_{L^{6}}
+||U^{2}_{0}\psi_{n}-U^{2}_{0}\psi_{0}||_{L^{\frac{6}{1+2\varepsilon}}}\rightarrow0,\ \ \hbox{as}\ \ n\to\infty,
\end{aligned}
\end{equation}
consequently,
$$
\int_{\mathbb{R}^{3}}(I_{\mu_{n}}\ast(U^{2^{\ast}_{\mu_{n}}-1}_{n}\psi_{n}-U^{2}_{0}\psi_{0}))U^{2^{\ast}_{\mu_{n}}-1}_{n}\varphi dx\to 0,\ \ \hbox{as}\ \ n\to\infty.
$$

Second, notice that $||U^{2}_{0}\psi_{0}||_{L^{2}}\leq||U_{0}||^{2}_{L^{6}}||\psi_{0}||_{L^{6}}$ and
\begin{equation}\nonumber
\begin{aligned}
||I_{\mu_{n}}\ast(U^{2}_{0}\psi_{0})||_{L^{2}}\leq||U^{2}_{0}\psi_{0}||_{L^{\frac{6}{9-2\mu_{n}}}}
\leq||U_{0}||^{2}_{L^{\frac{6}{4-\mu_{n}}}}||\psi_{0}||_{L^{6}},
\end{aligned}
\end{equation}
thus $U^{2}_{0}\psi_{0}$ and $I_{\mu_{n}}\ast(U^{2}_{0}\psi_{0})$ belong to $L^{2}(\mathbb{R}^{3})$.
We can use Fourier transform formula to get
$$
||I_{\mu_{n}}\ast(U^{2}_{0}\psi_{0})-U^{2}_{0}\psi_{0}||_{L^{2}}
=||\widehat{I_{\mu_{n}}\ast(U^{2}_{0}\psi_{0})}-\widehat{U^{2}_{0}\psi_{0}}||_{L^{2}}
=||\left(\frac{1}{(2\pi|\xi|)^{3-\mu_{n}}}-1\right)\widehat{U^{2}_{0}\psi_{0}}||_{L^{2}}.
$$
Decomposing $\mathbb{R}^{3}=B_{1}(0)\cup \{\mathbb{R}^{3}-B_{1}(0)\}$, by the decay estimates of $U_0$ and the Dominated Convergence Theorem, we conclude that
$$
||I_{\mu_{n}}\ast(U^{2}_{0}\psi_{0})-U^{2}_{0}\psi_{0}||_{L^{2}}\to 0,\ \ \hbox{as}\ \ n\to\infty.
$$
Now by H\"{o}lder inequality we know
\begin{equation}\nonumber
\begin{aligned}
&\int_{\mathbb{R}^{3}}\big(I_{\mu_{n}}\ast(U^{2}_{0}\psi_{0})-U^{2}_{0}\psi_{0}\big)U^{2^{\ast}_{\mu_{n}}-1}_{n}\varphi dx\\
&\hspace{6mm}\leq||I_{\mu_{n}}\ast(U^{2}_{0}\psi_{0})-U^{2}_{0}\psi_{0}||_{L^{2}}
||U_{n}||^{2^{\ast}_{\mu_{n}}-1}_{L^{3(5-\mu_{n})}}||\varphi||_{L^{2^{\ast}}}\to 0,\ \ \hbox{as}\ \ n\to\infty.
\end{aligned}
\end{equation}

Third, as in the first step, for $0<\varepsilon<1$ small enough,
\begin{equation}\nonumber
\begin{aligned}
&\int_{\mathbb{R}^{3}}(U^{2}_{0}\psi_{0}-U^{2^{\ast}_{\mu_{n}}-1}_{n}\psi_{n})U^{2^{\ast}_{\mu_{n}}-1}_{n}\varphi dx\\
&\hspace{6mm}\leq||U^{2}_{0}\psi_{0}-U^{2^{\ast}_{\mu_{n}}-1}_{n}\psi_{n}||_{L^{\frac{6}{1+2\varepsilon}}}
||U_{n}||^{2^{\ast}_{\mu_{n}}-1}_{L^{\frac{3(6-\mu_{n})}{2-\varepsilon}}}||\varphi||_{L^{2^{\ast}}}\rightarrow0.\ \ \hbox{as}\ \ n\to\infty.
\end{aligned}
\end{equation}
Therefore, as $n\rightarrow\infty$, we obtain that
$$
\int_{\mathbb{R}^{3}}(I_{\mu_{n}}\ast(U^{2^{\ast}_{\mu_{n}}-1}_{n}\psi_{n}))U^{2^{\ast}_{\mu_{n}}-1}_{n}\varphi dx-\int_{\mathbb{R}^{3}}U^{2^{\ast}_{\mu_{n}}-1}_{n}\psi_{n}U^{2^{\ast}_{\mu_{n}}-1}_{n}\varphi dx\to0.
$$
Meanwhile, since $U_n\to U_0$ as $n\to\infty$, we have
$$
\int_{\mathbb{R}^{3}}U^{2^{\ast}_{\mu_{n}}-1}_{n}\psi_{n}U^{2^{\ast}_{\mu_{n}}-1}_{n}\varphi dx\rightarrow\int_{\mathbb{R}^{3}}U_{0}^{4}\psi_{0}\varphi dx.
$$
Consequently, we obtain Claim \eqref{Claim1}  that
$$
\int_{\mathbb{R}^{3}}\big(I_{\mu_{n}}\ast(U^{2^{\ast}_{\mu_{n}}-1}_{n}\psi_{n})\big)U^{2^{\ast}_{\mu_{n}}-1}_{n}\varphi  dx\rightarrow\int_{\mathbb{R}^{3}}U_{0}^{4}\psi_{0}\varphi dx, \ \ \hbox{as} \ \ n\to \infty.
$$

Similarly, we claim that
\begin{equation}\label{Claim2}
\int_{\mathbb{R}^{3}}(I_{\mu_{n}}\ast(U^{2^{\ast}_{\mu_{n}}}_{n}))U^{2^{\ast}_{\mu_{n}}-2}_{n}\psi_{n}\varphi dx\rightarrow\int_{\mathbb{R}^{3}}U_{0}^{4}\psi_{0}\varphi dx, \ \ \hbox{as} \ \ n\to \infty.
\end{equation}
In fact, notice that
$$
I_{\mu_{n}}\ast(U^{2^{\ast}_{\mu_{n}}}_{n})-U^{2^{\ast}_{\mu_{n}}}_{n}
=\big(I_{\mu_{n}}\ast(U^{2^{\ast}_{\mu_{n}}}_{n}-U^{3}_{0})\big)
+(I_{\mu_{n}}\ast(U^{3}_{0})-U^{3}_{0})
+(U^{3}_{0}-U^{2^{\ast}_{\mu_{n}}}_{n}),
$$
then we can estimate the integral in three parts.

First, for $0<\varepsilon<1$ small, as $n\to \infty$, we have
\begin{equation}\nonumber
\begin{aligned}
&\int_{\mathbb{R}^{3}}(I_{\mu_{n}}\ast(U^{2^{\ast}_{\mu_{n}}}_{n}-U^{3}_{0}))U^{2^{\ast}_{\mu_{n}}-2}_{n}\psi_{n}\varphi dx\\
&\hspace{6mm}\leq||I_{\mu_{n}}\ast(U^{2^{\ast}_{\mu_{n}}}_{n}-U^{3}_{0})||_{L^{\frac{3}{\mu_{n}-2+\varepsilon}}}
||U^{2^{\ast}_{\mu_{n}}-2}_{n}||_{L^{\frac{3}{4-\mu_{n}-\varepsilon}}}||\psi_{n}||_{L^{2^{\ast}}}||\varphi||_{L^{2^{\ast}}}\\
&\hspace{6mm}\leq C||U^{2^{\ast}_{\mu_{n}}}_{n}-U^{3}_{0}||_{L^{\frac{3}{1+\varepsilon}}}
||U_{n}||^{2^{\ast}_{\mu_{n}}-2}_{L^{\frac{3(4-\mu_{n})}{4-\mu_{n}-\varepsilon}}}||\psi_{n}||_{L^{2^{\ast}}}||\varphi ||_{L^{2^{\ast}}}\rightarrow0,
\end{aligned}
\end{equation}
where we apply Remark \ref{TH2} and the Dominated Convergence Theorem.

Second, using the Fourier transform arguments again, we know
$$
\lim\limits_{\mu_{n}\rightarrow3}||I_{\mu_{n}}\ast(U^{3}_{0})-U^{3}_{0}||_{L^{2}}=0.
$$
Thus, by H\"{o}lder inequality, we have
\begin{equation}\nonumber
\begin{aligned}
&\int_{\mathbb{R}^{3}}(I_{\mu_{n}}\ast(U^{3}_{0})-U^{3}_{0})U^{2^{\ast}_{\mu_{n}}-2}_{n}\psi_{n}\varphi dx\\
&\hspace{6mm}\leq||I_{\mu_{n}}\ast(U^{3}_{0})-U^{3}_{0}||_{L^{2}}
||U_{n}||^{2^{\ast}_{\mu_{n}}-2}_{L^{6(4-\mu_{n})}}||\psi_{n}||_{L^{2^{\ast}}}||\varphi ||_{L^{2^{\ast}}}\rightarrow0, \ \ \hbox{as} \ \ n\to \infty.
\end{aligned}
\end{equation}

Third, for $0<\varepsilon<1$ small enough,
\begin{equation}\nonumber
\begin{aligned}
&\int_{\mathbb{R}^{3}}(U^{3}_{0}-U^{2^{\ast}_{\mu_{n}}}_{n})U^{2^{\ast}_{\mu_{n}}-2}_{n}\psi_{n}\varphi  dx\\
&\hspace{6mm}\leq||U^{3}_{0}-U^{2^{\ast}_{\mu_{n}}}_{n}||_{L^{\frac{3}{1+\varepsilon}}}
||U_{n}||^{2^{\ast}_{\mu_{n}}-2}_{L^{\frac{3(4-\mu_{n})}{1-\varepsilon}}}||\psi_{n}||_{L^{2^{\ast}}}||\varphi ||_{L^{2^{\ast}}}\rightarrow0, \ \ \hbox{as} \ \ n\to \infty.
\end{aligned}
\end{equation}
Therefore, as $n\rightarrow\infty$,
$$
\int_{\mathbb{R}^{3}}(I_{\mu_{n}}\ast(U^{2^{\ast}_{n}}_{n}))U^{2^{\ast}_{\mu_{n}}-2}_{n}\psi_{n}\varphi  dx-\int_{\mathbb{R}^{3}}U^{2^{\ast}_{\mu_{n}}}_{n}U^{2^{\ast}_{\mu_{n}}-2}_{n}\psi_{n}\varphi dx\to0,
$$
Notice that
$$
\int_{\mathbb{R}^{3}}U^{2^{\ast}_{\mu_{n}}}_{n}U^{2^{\ast}_{\mu_{n}}-2}_{n}\psi_{n}\varphi dx\rightarrow\int_{\mathbb{R}^{3}}U_{0}^{4}\psi_{0}\varphi dx, \ \ \hbox{as} \ \ n\to \infty.
$$
Thus the claim \eqref{Claim2} is proved.

Now we may take the limit as $n\rightarrow\infty$ in \eqref{B} and obtain that
\begin{equation}\label{B2}
\begin{aligned}
\int_{\mathbb{R}^{3}}\nabla\psi_0\nabla\varphi dx=5\int_{\mathbb{R}^{3}}U_{0}^{4}\psi_{0}\varphi dx,
\end{aligned}
\end{equation}
 for any $\varphi\in D^{1,2}(\mathbb{R}^{3})$. Hence $\psi_{0}$ satisfies
\begin{equation}\label{C}
-\Delta\psi_{0}-5U_{0}^{4}\psi_{0}=0.
\end{equation}
By the nondegeneracy of $U_{0}$, we know
\begin{equation}\label{NR}
\psi_{0}\in\mathbf{span}\{\partial_{1}U_{0},\partial_{2}U_{0},\partial_{3}U_{0},\partial_{t}U_{0}\}.
\end{equation}

We prove that $\psi_{0}\neq0$. We take $\varphi_{n}=\psi_{n}$ in equation \eqref{B} to get
\begin{equation}\label{NTrival}
\begin{aligned}
\int_{\mathbb{R}^{3}}|\nabla\psi_n|^2 dx=2^{\ast}_{\mu_n}\int_{\mathbb{R}^{3}}\big(I_{\mu_{n}}\ast(U^{2^{\ast}_{\mu_{n}}-1}_{n}\psi_n)\big)U^{2^{\ast}_{\mu_{n}}-1}_{n}\psi_n dx+(2^{\ast}_{\mu_n}-1)\int_{\mathbb{R}^{3}}\big(I_{\mu_{n}}\ast(U^{2^{\ast}_{\mu_{n}}}_n)\big)U^{2^{\ast}_{\mu_{n}}-2}_{n}\psi^2_n dx.
\end{aligned}
\end{equation}
However, on one hand,
$$
\int_{\mathbb{R}^{3}}|\nabla\psi_n|^2 dx=1.
$$
On the other hand, we can repeat the arguments in Claims \eqref{Claim1} and \eqref{Claim2} to get
$$
2^{\ast}_{\mu_n}\int_{\mathbb{R}^{3}}\big(I_{\mu_{n}}\ast(U^{2^{\ast}_{\mu_{n}}-1}_{n}\psi_n)\big)U^{2^{\ast}_{\mu_{n}}-1}_{n}\psi_n dx+(2^{\ast}_{\mu_n}-1)\int_{\mathbb{R}^{3}}\big(I_{\mu_{n}}\ast(U^{2^{\ast}_{\mu_{n}}}_n)\big)U^{2^{\ast}_{\mu_{n}}-2}_{n}\psi^2_n dx\rightarrow5\int_{\mathbb{R}^{3}}U_{0}^{4}\psi^2_{0}dx,
$$
that is
$$
\int_{\mathbb{R}^{3}}U_{0}^{4}\psi^{2}_{0}dx=\frac{1}{5}.
$$
Therefore $\psi_{0}\neq0$.

Since we know
$$
\psi_{n}\in T_{z}{Z_{\mu}}^\bot=\mathbf{span}\{\partial_{1}U_0,\partial_{2}U_0,\partial_{3}U_0,\partial_{t}U_0\}^\bot,
$$
then for every
$$
\eta_0=aD_{t}U_{0}+\mathbf{b}\cdot\nabla U_{0}\in \mathbf{span}\{\partial_{1}U_0,\partial_{2}U_0,\partial_{3}U_0,\partial_{t}U_0\},
$$
where $a\in\mathbb{R},\mathbf{b}=(b_{1},b_{2},b_{3})\in\mathbb{R}^{3}$, we have
$$
<\psi_{n},\eta_{0}>=0
$$
Where we denote $<\cdot,\cdot>$ as the inner product in $D^{1,2}(\mathbb{R}^{3})$. However, as $n\rightarrow\infty$, we know
$$
<\psi_{0},\eta_{0}>=0.
$$
This contradicts to  \eqref{NR} that $\psi_{0}\in\mathbf{span}\{\partial_{1}U_{0},\partial_{2}U_{0},\partial_{3}U_{0},\partial_{t}U_{0}\}$, since we had proved that $\psi_{0}\neq0$. Hence any solution satisfies \eqref{A} must belong to $T_{z}Z$ in the space $L^{2^{\ast}}$, that is $T_{z}{Z_{\mu}}=\mathbf{Ker}{[I_{\mu}^{\prime\prime}(z)]}$, we finish the proof.

Finally, it is easy to check that operator $I_{\mu}^{\prime\prime}(z)$ is Fredholm and
$$
dim(\mathbf{ker}[I_{\mu}^{\prime\prime}(z)])-codim(\mathbf{Im}[I_{\mu}^{\prime\prime}(z)])=0.
$$
Thus for all $z\in Z_{\mu}$, $I_{\mu}^{\prime\prime}(z)$ is an index 0 Fredholm map. Combining with Theorem \ref{TH3}, we conclude that the critical manifold $Z_{\mu}$ is nondegenerate when $\mu$ close to $N=3$. $\hfill{} \Box$

\end{document}